\newtheorem{theorem}{Theorem}
\newtheorem{prop}{Proposition}
\newtheorem{lemma}{Lemma}
\newtheorem{rem}{Remark}
\newtheorem{exmp}{Example}
\begin{document}
\title[Grassmann graphs of finite-rank self-adjoint operators]{Generalized Grassmann graphs associated to conjugacy classes of finite-rank self-adjoint operators}
\author{Mark Pankov, Krzysztof Petelczyc, Mariusz \.Zynel}
\subjclass[2000]{05E18, 47B15}

\keywords{Grassmann graph, conjugacy class of finite rank self-adjoint operators,
graph automorphism}
\address{Mark Pankov: Faculty of Mathematics and Computer Science, 
University of Warmia and Mazury, S{\l}oneczna 54, 10-710 Olsztyn, Poland}
\email{pankov@matman.uwm.edu.pl}
\address{Krzysztof Petelczyc, Mariusz \.Zynel: Faculty of Mathematics, University of Bia{\l}ystok, Cio{\l}kowskiego 1 M, 15-245 Bia{\l}ystok, Poland}
\email{kryzpet@math.uwb.edu.pl, mariusz@math.uwb.edu.pl}

\maketitle

\begin{abstract}
Two distinct projections of finite rank $m$ are adjacent if  
their difference is an operator of rank two or, equivalently, 
the intersection of their images is $(m-1)$-dimensional.
We extend this adjacency relation on other conjugacy classes of finite-rank self-adjoint operators
which leads to a natural generalization of Grassmann graphs.
Let ${\mathcal C}$  be a conjugacy class formed by finite-rank self-adjoint operators
with eigenspaces of dimension greater than $1$.
Under the assumption that operators from ${\mathcal C}$ have at least three eigenvalues
we prove that every automorphism of the corresponding generalized Grassmann graph 
is the composition of an automorphism induced by a unitary or anti-unitary operator
and the automorphism obtained from a permutation of eigenspaces with the same dimensions.
The case when the operators from ${\mathcal C}$ have two eigenvalues only 
is covered by classical  Chow's theorem which says that there are graph automorphisms induced
by semilinear automorphisms not preserving orthogonality.
\end{abstract}

\section{Introduction}
Classical Chow's theorem \cite{Chow} describes automorphisms of the Grassmann graph
whose vertices are $m$-dimensional subspaces of a vector space and 
two subspaces are adjacent (connected by an edge) if their intersection is $(m-1)$-dimensional.
It is closely related to Hua's theorem concerning adjacency preserving transformations 
of the space of matrices \cite{Wan}.
Two rectangular matrices of the same dimension are adjacent if their difference is of rank one.
Note that  the descriptions of automorphisms of so-called spine spaces \cite{PZ} puts together 
Chow's and Hua's theorems, i.e. each of them is contained in the result obtained in \cite{PZ}.

The Grassmannian of $m$-dimensional subspaces in a Hilbert space
can be identified with the conjugacy class of rank-$m$ projections.
It is natural to say that two projections (of the same rank) are adjacent if their images are adjacent subspaces
which is equivalent to the fact that the difference of these projections is an operator of rank two.
Projections can be characterized as self-adjoint idempotents in the algebra of bounded operators.
They play an important role in operator theory and mathematical foundations of quantum mechanics.
Gleason's theorem \cite{Gleason} states that pure states of quantum mechanical systems correspond to rank-one projections.
Classical Wigner's theorem \cite{Wigner} describes symmetries of the space of pure states, 
i.e. the conjugacy class of rank-one projections.
Moln\'ar \cite{M1,M2} extended Wigner's theorem on other Hilbert Grassmannians. 
Initially, such kind of  results were not related to Chow's theorem.
However, in more recent papers \cite{GeherSemrl, Geher,Pankov1}
Chow's theorem was successfully exploited to prove some Wigner-type theorems, 
see \cite{Pankov-book} for a detailed description of the topic.

In this paper, we extend Chow's theorem from the conjugacy classes of projections
(i.e. Hilbert Grassmannians) to conjugacy classes of other self-adjoint operators of finite rank.
Such operators are linear combinations of projections
and every conjugacy class formed by them is completely determined by 
the spectrum  of operators and the dimensions of their eigenspaces.

First of all, we want to understand the concept of adjacency.
It is natural to require that 
the difference of two adjacent operators $A$ and $B$ from the same conjugacy class must be of minimal rank. 
This rank is two (since the trace of $A-B$ is zero and there is no 
rank-one operator with zero trace). 
However, this condition does not guarantee that 
$A$ and $B$ have sufficiently many common eigenvectors 
(Examples \ref{exmp-pseudo-ad1} and \ref{exmp-pseudo-ad2}).
The requirement that 
the kernel and the image of $A-B$ are invariant to both $A$ and $B$ resolves this issue.
The adjacency of operators can be characterized as follows: two operators are adjacent if there is a pair of their eigenvalues such that the eigenspaces corresponding to each of these eigenvalues are adjacent as subspaces, and the remaining eigenspaces are pairwise equal.

The main result of this paper concerns automorphisms of the generalized Grassmann graph 
whose vertex set is  a conjugacy class ${\mathcal C}$ of finite-rank self-adjoint operators and 
the edge set is our adjacency relation. 
Every unitary or anti-unitary operator induces an automorphism of the graph.
If an operator from ${\mathcal C}$ has two distinct eigenspaces of the same dimension,
then the operator with these two eigenspaces transposed also belongs to the conjugacy class ${\mathcal C}$.
This way another class of automorphisms arises. 
Under the assumptions that operators from ${\mathcal C}$ have at least three eigenvalues and 
the dimension of each eigenspace is greater than one,
we prove that every automorphism of the graph is the composition of automorphisms from these two classes.
When the operators from ${\mathcal C}$ have two eigenvalues only, Chow's
theorem can be directly applied (see Example \ref{exmp-ad2}) and there are
graph automorphisms induced by semilinear automorphisms which
do not preserve orthogonality.

Connectedness and  maximal cliques in 
generalized Grassmann graphs are the main tools used in our reasonings.
The key construction is as follows.
For two fixed eigenvalues we consider subsets maximal with respect to the property that 
any two operators are connected by a path consisting of edges corresponding to these eigenvalues only. 
The family of all such connected components can be described 
as a conjugacy class of finite-rank self-adjoint operators. 
The number of eigenvalues for this class is one less than 
the number of eigenvalues of operators from ${\mathcal C}$.

There are several results (one of Hua's theorems \cite[Theorem 6.4]{Wan}  and its generalizations \cite{Herm1,Herm2})
which describe adjacency preserving maps between sets of Hermitian matrices, i.e. self-adjoint operators of finite rank.
It was noted above that two matrices are adjacent if their difference is of rank one
which immediately implies that these matrices belong to distinct conjugacy classes. 
So, these are results of different nature.

\section{Conjugacy classes of finite-rank self-adjoint operators}
Let $H$ be a complex Hilbert space and let ${\mathcal P}_{m}(H)$ be the set of all rank-$m$  projections. 
If $\lambda$ is a non-zero scalar, then we write $\lambda{\mathcal P}_{m}(H)$ for the set of all operators $\lambda P$, where $P\in {\mathcal P}_{m}(H)$.
For every closed subspace $X\subset H$ the projection on $X$ will be denoted by $P_{X}$.
Note that $P_{X^{\perp}}={\rm Id}-P_{X}$.

Two operators $A$ and $B$ on $H$  are {\it unitary conjugate} if 
there is a unitary operator $U$ on $H$ such that $B=UAU^{*}$.
Every conjugacy class ${\mathcal C}$ formed by finite-rank self-adjoint operators on $H$
is completely determined by the spectrum $\sigma=\{a_1,\dots,a_k\}$ of operators from ${\mathcal C}$ 
(each $a_i$ is real) and  the set $d=\{n_{1},\dots,n_k\}$, 
where $n_i$ is the dimension of the eigenspaces corresponding to the eigenvalue $a_i$.
For every $A\in {\mathcal C}$ the eigenspaces are mutually orthogonal
and the eigenspace corresponding to non-zero $a_i$ is finite-dimensional
(since $A$ is of finite rank).
If one of $a_i$ is zero, then the kernel of $A$ (i.e. the corresponding eigenspace) is non-trivial.
The kernel of $A$ is finite-dimensional if and only if the dimension of $H$ is finite
(since the kernel is the orthogonal complement of the image which is always finite-dimensional). 
In what follows, the conjugacy class ${\mathcal C}$ will be denoted by ${\mathcal G}(\sigma, d)$
and called the $(\sigma, d)$-Grassmannian.

For example, $\lambda{\mathcal P}_{m}(H)$ is the $(\sigma, d)$-Grassmannian, where
$\sigma=\{0,\lambda\}$ and $d=\{\dim H-m,m\}$.

If $\dim H=n$ is finite and $d=\{n\}$,
then ${\mathcal G}(\sigma, d)$ consists of a unique operator which is a scalar multiple of the identity. 
We will always exclude this case from the consideration.

If $A\in {\mathcal G}(\sigma, d)$ and $X_i$ is the eigenspace of $A$ corresponding to $a_i$,
then
$$A=\sum^{k}_{i=1}a_{i}P_{X_i}.$$
Denote by $S(d)$ the group of all permutations $\delta$ on the set $\{1,\dots,k\}$ satisfying 
$n_{\delta(i)}=n_i$.
This group is trivial if the dimensions of all $X_i$ are mutually distinct.
For every permutation $\delta \in S(d)$  the operator
$$\delta(A)=\sum^{k}_{i=1}a_i P_{X_{\delta(i)}}$$
belongs to ${\mathcal G}(\sigma, d)$.

\begin{exmp}{\rm
Suppose that $\dim H=2m$.
Then ${\mathcal G}(\sigma, d)={\mathcal P}_{m}(H)$
for $\sigma=\{0,1\}$ and $d=\{m,m\}$.
In this case, $S(d)$  coincides with $S_2$. 
If $P_{X}$ belongs to ${\mathcal P}_{m}(H)$ and $\delta$ is the non-trivial element of $S(d)$,
then $\delta(P_{X})=P_{X^{\perp}}$.
}\end{exmp}

\section{Grassmann graphs}
The conjugacy class ${\mathcal P}_{m}(H)$ can be naturally identified with 
the Grassmannian ${\mathcal G}_{m}(H)$ formed by $m$-dimensional subspaces of $H$.

Two $m$-dimensional subspaces of $H$ are called {\it adjacent} if their intersection is 
$(m-1)$-dimensional or, equivalently, their sum is $(m+1)$-dimensional.
The {\it Grassmann graph} $\Gamma_{m}(H)$ is the simple graph whose vertex set is ${\mathcal G}_{m}(H)$ 
and two $m$-dimensional subspaces are connected by an edge in this graph if they are adjacent. 
This graph is connected \cite[Proposition 2.11]{Pankov-book}.
If $m=1$ or $m=\dim H-1$, then any two distinct $m$-dimensional subspaces are adjacent, i.e.
any two distinct vertices in $\Gamma_{m}(H)$ are connected by an edge.
From this moment, we assume that $1<m<\dim H -1$.

If $\dim H$ is finite, 
then the map sending every $m$-dimensional subspace $X$  to the orthogonal complement $X^{\perp}$
is an isomorphism between the graphs $\Gamma_{m}(H)$ and $\Gamma_{\dim H-m}(H)$.
In the case when $H$ is infinite-dimensional,
two closed subspaces $X,Y\subset H$ of codimension $m$ are said to be {\it adjacent}
if $X\cap Y$ is a hyperplane  (a subspace of codimension $1$) in both $X$ and $Y$.
This is equivalent to the fact that $X^{\perp}$ and $Y^{\perp}$ are adjacent $m$-dimensional subspaces,
i.e. the Grassmann graph formed by closed subspaces of codimension $m$ is isomorphic to $\Gamma_{m}(H)$. 

The description of automorphisms of Grassmann graphs is based on the concept of semilinear automorphism.
Recall that a map $V:H\to H$ is {\it semilinear} if 
$$V(x+y)=V(x)+V(y)$$
for all $x,y\in H$ and there is an automorphism $\tau$ of the field ${\mathbb C}$ of complex numbers such that 
$$V(ax)=\tau(a)V(x)$$
for all $a\in {\mathbb C}$ and $x\in H$.
Note that there are automorphisms of ${\mathbb C}$ distinct from the identity and the complex conjugation. 
A semilinear map $V:H\to H$ is called a {\it semilinear  automorphism} of $H$ if it is bijective.

\begin{rem}\label{rem-ortho}
{\rm We will use the following fact:
every semilinear automorphism of $H$ sending orthogonal vectors to orthogonal vectors 
is a non-zero scalar multiple of a unitary or anti-unitary operator \cite[Proposition 4.2]{Pankov-book}.
}\end{rem}

Every semilinear automorphism of $H$ induces an automorphism of the Grassmann graph $\Gamma_{m}(H)$.
While this is trivial, the inverse is the famous Chow's theorem.

\begin{theorem}[Chow \cite{Chow}]
For every automorphism $f$ of the Grassmann graph $\Gamma_{m}(H)$
there is a semilinear automorphism $V:H\to H$ such that one of the following possibilities is realized:
\begin{itemize}
\item[$\bullet$] $f(X)=V(X)$ for all $X\in{\mathcal G}_{m}(H)$;
\item[$\bullet$] $\dim H=2m$ and $f(X)=V(X)^{\perp}$ for all $X\in{\mathcal G}_{m}(H)$.
\end{itemize}
\end{theorem}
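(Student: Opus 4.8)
The plan is to exploit the structure of maximal cliques in $\Gamma_m(H)$ and to reduce the statement to the fundamental theorem of projective geometry. I would first show that the maximal cliques of $\Gamma_m(H)$ split into two families, the \emph{stars}
\[
\mathcal{S}_S=\{X\in\mathcal{G}_m(H):S\subset X\},\qquad \dim S=m-1,
\]
and the \emph{tops}
\[
\mathcal{T}_U=\{X\in\mathcal{G}_m(H):X\subset U\},\qquad \dim U=m+1.
\]
That every maximal clique is of one of these two types is a routine linear-algebra verification: if $X,Y,Z\in\mathcal{G}_m(H)$ are pairwise adjacent, then either they share a common $(m-1)$-dimensional subspace or they lie in a common $(m+1)$-dimensional subspace, so a clique containing them extends to a star in the first case and to a top in the second.

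Since $f$ preserves adjacency in both directions, it carries maximal cliques bijectively to maximal cliques, and the next step is to determine how it acts on the two families. Here I would use two invariants. First, two distinct stars, or two distinct tops, meet in at most one vertex, whereas a star $\mathcal{S}_S$ and a top $\mathcal{T}_U$ with $S\subset U$ meet in a whole \emph{pencil} $\{X:S\subset X\subset U\}$; consequently $f$ maps the set of pencils onto itself. Second, equipped with its pencils a star is a projective space isomorphic to $\mathbb{P}(H/S)$, of projective dimension $\dim H-m$, while a top is isomorphic to the dual space $\mathbb{P}(U)^{*}$, of projective dimension $m$. The hypothesis $1<m<\dim H-1$ forces both dimensions to be at least $2$, and since this dimension is a combinatorial invariant of the clique, $f$ either preserves each of the two families or interchanges them, the latter being possible only when $\dim H-m=m$, i.e. $\dim H=2m$; a connectedness argument rules out any mixed behaviour.

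Suppose first that $f$ preserves both families. Then it induces a bijection $g$ on the $(m-1)$-dimensional subspaces via $f(\mathcal{S}_S)=\mathcal{S}_{g(S)}$ and it sends pencils to pencils; restricted to a single star, $f$ is thus a collineation $\mathbb{P}(H/S)\to\mathbb{P}(H/g(S))$. The main work, and the principal obstacle, is to assemble these local collineations into a single collineation of $\mathbb{P}(H)$ and to check their compatibility across different stars, for which the preservation of the incidence $S\subset U$ between star-indices and top-indices is the crucial bookkeeping; once this is done the fundamental theorem of projective geometry (applicable since each star has projective dimension at least $2$) yields a semilinear automorphism $V:H\to H$ with $f(X)=V(X)$ for all $X$. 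In the remaining case $\dim H=2m$, where $f$ may interchange stars and tops, the map $X\mapsto f(X)^{\perp}$ preserves the two families, because $X\mapsto X^{\perp}$ itself swaps them; applying the preserving case to this map gives a semilinear $V$ with $f(X)^{\perp}=V(X)$, that is $f(X)=V(X)^{\perp}$, which is the second alternative of the statement.
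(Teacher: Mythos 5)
First, a point of framing: the paper does not prove this statement at all --- Chow's theorem is imported from the literature (the citation [Chow] together with Sections 2.3--2.4 of the book cited as [Pankov-book]), and the text surrounding it only records the facts about stars, tops, lines and the properties (C1), (C2) that the later arguments need. So your sketch can only be compared with the standard proof the paper points to, and in outline it does follow that route faithfully: the classification of maximal cliques into stars and tops, the intersection pattern (two stars or two tops meet in at most one vertex, an incident star--top pair meets in a pencil), the identification of a star with $\mathbb{P}(H/S)$ and of a top with the dual of $\mathbb{P}(U)$, the dimension count under $1<m<\dim H-1$, and the connectedness argument forcing $f$ to preserve the two families except possibly when $\dim H=2m$ are all correct, as is your reduction of the case $\dim H=2m$ to the family-preserving case via $X\mapsto f(X)^{\perp}$.

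The genuine gap is the step you yourself call ``the main work'': it is not bookkeeping, it is the entire content of the theorem, and your proposal stops exactly where it begins. Worse, as formulated the patching is not even well-posed: the fundamental theorem of projective geometry applied inside one star produces a semilinear bijection of the \emph{quotient} $H/S$ onto $H/g(S)$, and semilinear maps of varying quotients cannot be ``glued'' into a collineation of $\mathbb{P}(H)$ --- the points of $\mathbb{P}(H)$ are not vertices of any star, so there is nothing for these local maps to agree on. The standard completion avoids this entirely by descent: since $f$ maps stars to stars, it induces a bijection $g$ of ${\mathcal G}_{m-1}(H)$ defined by $f({\mathcal S}_S)={\mathcal S}_{g(S)}$; two stars have nonempty intersection precisely when their centers coincide or are adjacent, so $g$ is an automorphism of $\Gamma_{m-1}(H)$, and one iterates down to $m=2$, where stars are indexed by points of $\mathbb{P}(H)$ and three points are collinear exactly when their stars share a common vertex, so the induced point map is a collineation; the fundamental theorem of projective geometry is invoked once, at this bottom level, to produce $V$, and one climbs back up using $S\subset X\Leftrightarrow X\in{\mathcal S}_S$, which shows that the $(m-1)$-dimensional subspaces of $f(X)$ are exactly the $V(S)$ with $S\subset X$, whence $f(X)=V(X)$. (The intermediate levels also require ruling out the dual alternative of the inductive hypothesis, e.g.\ by the dimension comparison you already noted.) Until the descent-and-climb argument, or some equivalent of it, is actually carried out, the proposal is a correct description of the scenery around the proof rather than a proof.
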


\begin{rem}\label{rem-chow}{\rm
If $\Gamma_{m}(H)$ is isomorphic to $\Gamma_{m'}(H')$, then one of the following holds: 
\begin{itemize}
\item[$\bullet$] $\dim H=\dim H'$ is finite and $m'=m$ or $m'=\dim H-m$;
\item[$\bullet$] $H$ and $H'$ are infinite-dimensional and $m=m'$.
\end{itemize}
This statement is a simple consequence of argument used to prove Chow's theorem.
}\end{rem}

The proof of Chow's theorem is quite elementary and based on properties of maximal cliques of $\Gamma_{m}(H)$
called {\it stars} and {\it tops} \cite[Sections 2.3 and 2.4]{Pankov-book}.
Recall that a {\it clique} is a subset in the vertex set of a graph, where any two distinct vertices are connected by an edge. 

For any $(m-1)$-dimensional subspace $M\subset H$ 
the associated {\it star} consists of all $m$-dimensional subspaces containing $M$. 
For every $(m+1)$-dimensional subspace $N\subset H$
the associated {\it top} is ${\mathcal G}_{m}(N)$, i.e. the set of all $m$-dimensional subspaces of $N$.
The definition of stars and tops for the Grassmann graph formed by closed subspaces of codimension $m$ is similar.
The orthocomplementation map $X\to X^{\perp}$ sends stars to tops and conversely. 
While it is clear that stars and tops are cliques of $\Gamma_{m}(H)$, it can be proved
(\cite[Proposition 2.14]{Pankov-book}) that every 
maximal clique in $\Gamma_{m}(H)$ is a star or a top.

For any pair consisting of an $(m-1)$-dimensional subspace $M$ and 
an $(m+1)$-dimensional subspace $N$ such that $M\subset N$
the set of all $m$-dimensional subspaces $X$ satisfying $M\subset X\subset N$ is called a {\it line}.
So, every line is the intersection of a star and a top.
For any adjacent $m$-dimensional subspaces $X,Y$ there is a unique line containing them; 
this line consists of all $m$-dimensional subspaces $Z$ satisfying 
$$X\cap Y\subset Z \subset X+Y.$$
The following properties of maximal cliques are well-known
(see Section 2.3 and the proof of Theorem 2.15 in \cite{Pankov-book}):
\begin{enumerate}
\item[(C1)] 
The intersection of two distinct maximal cliques of $\Gamma_{m}(H)$ is empty or one element or a line.
\item[(C2)] 
For any two maximal cliques ${\mathcal X}$ and ${\mathcal Y}$ in $\Gamma_{m}(H)$  
there is a sequence of maximal cliques 
$${\mathcal X}={\mathcal X}_{0},{\mathcal X}_1,\dots,{\mathcal X}_{m}={\mathcal Y}$$
such that ${\mathcal X}_{t-1}\cap {\mathcal X}_{t}$ is a line for every
$t\in \{1,\dots,m\}$.
\end{enumerate}

\section{Adjacency relation on conjugacy classes}
Suppose that 
$$\sigma=\{a_{1},\dots,a_{k}\}\;\mbox{ and }\;  d=\{n_{1},\dots,n_k\},$$
where $a_{1},\dots,a_k$ are mutually distinct real numbers,
$n_1,\dots,n_k\in {\mathbb N}\cup \{\infty\}$ are not necessarily distinct
and only one $n_i$ corresponding to $a_i=0$ can be infinity.
According to our assumption, the case when $H$ is finite-dimensional and 
$d$ consists only of $n_1=\dim H$ is excluded.
Every operator from ${\mathcal G}(\sigma, d)$ is of rank $n$, 
where $n$ is the sum of all $n_i$ such that $a_i\ne 0$.
We take distinct $A,B\in {\mathcal G}(\sigma, d)$ and 
denote by $X_i$ and $Y_i$ the eigenspaces of $A$ and $B$ (respectively) corresponding to the eigenvalue $a_{i}$.

First of all, we observe that the rank of the operator $B-A$ is greater than $1$. 
This follows from the fact that the trace of $B-A$ is zero and the trace of every rank-one operator is non-zero.
If $B-A$ is of rank $2$, then it has non-zero real eigenvalues $-c,c$ and 
$$B=A-cP_{-}+cP_{+},$$
where $P_{-}$ and $P_{+}$ are the projections on the  $1$-dimensional eigenspaces of $B-A$ 
corresponding to $-c$ and $c$, respectively.

Next, observe that if ${\rm Ker}(B-A)$  or ${\rm Im}(B-A)$ is invariant to $A$, 
then both of these subspaces are invariant to $B$ and vice versa. 
This can be justified as follows. 
If a closed subspace $X$ is invariant to a compact self-adjoint operator $C$,
then the orthogonal complement $X^{\perp}$ is also invariant to $C$ and 
each of the subspaces $X,X^{\perp}$ has an orthogonal basis formed by eigenvectors of $C$.
Note that ${\rm Ker}(B-A)$ is the orthogonal complement of ${\rm Im}(B-A)$.
Suppose that  these subspaces are invariant to one of the operators, say $A$.
Then there is an orthogonal basis $\{e_t\}_{t\in T}$ of ${\rm Ker}(B-A)$
formed by eigenvectors of $A$. 
Since $A(x)=B(x)$ for every $x\in {\rm Ker}(B-A)$, 
each $e_t$ is an eigenvector of  both $A$ and $B$ corresponding to the same eigenvalue
which means that ${\rm Ker}(B-A)$ and, consequently, ${\rm Im}(B-A)$ are invariant to $B$.

We say that operators $A,B\in {\mathcal G}(\sigma, d)$
are {\it adjacent} if the following conditions are satisfied:
\begin{enumerate}
\item[(A1)] the rank of $B-A$ is equal to $2$;
\item[(A2)] ${\rm Im}(B-A)$ and ${\rm Ker}(B-A)$ are invariant to both $A$ and $B$.
\end{enumerate}
It was noted above that (A2) holds if one of the subspaces is invariant to at least one of the operators.

\begin{exmp}\label{exmp-pr}{\rm
Let $X$ and $Y$ be adjacent $m$-dimensional subspaces of $H$.
Denote by $X'$ and $Y'$ the $1$-dimensional orthogonal complements of 
$X\cap Y$ in $X$ and $Y$, respectively. 
Then
$$P_{X}= P_{X\cap Y}+P_{X'}\;\mbox{ and }\;P_{Y}= P_{X\cap Y}+P_{Y'}$$
which implies that 
$$P_{Y}-P_{X}=P_{Y'}-P_{X'}.$$
The image of this operator is the $2$-dimensional subspace $S=X'+Y'$.
Since
$$P_{X}(S)=X'\subset S\;\mbox{ and }\;P_{Y}(S)=Y'\subset S,$$ 
$S$ is invariant to both $P_{X}$ and $P_{Y}$ and the same holds for $S^{\perp}$.
Note that $X^{\perp}$ and $Y^{\perp}$ are adjacent and  
$$P_{Y^{\perp}}-P_{X^{\perp}}=P_{Y''}-P_{X''},$$
where $X''$ and $Y''$ are the $1$-dimensional orthogonal complements of 
$X^\perp \cap Y^{\perp}$ in $X^{\perp}$ and $Y^{\perp}$, respectively.
Since $P_{X^{\perp}}={\rm Id}-P_{X}$ and $P_{Y^{\perp}}={\rm Id}-P_{Y}$, we have
$$
P_{X'}-P_{Y'}=P_{Y''}-P_{X''}
$$
and $S=X''+Y''$.
}\end{exmp}

If $A$ and $B$ are rank-$m$ projection, then (A1) immediately implies (A2).
It fails in the general case (Examples \ref{exmp-pseudo-ad1} and \ref{exmp-pseudo-ad2}).

\begin{exmp}\label{exmp-jp}{\rm
Let us take distinct $i,j\in \{1,\dots,k\}$. 
Suppose that $X_t,Y_t$ are adjacent for $t\in \{i,j\}$ and $X_t=Y_t$ if $t\not\in \{i,j\}$.
For $t\in \{i,j\}$ we denote by $X'_t$ and $Y'_t$ the orthogonal complement of $X_{t}\cap Y_{t}$ in $X_t$ and $Y_t$, respectively. 
Since 
$$X_i+X_j=Y_i+Y_j$$
is the orthogonal complement of the sum of all $X_t=Y_t$ with $t\not\in \{i,j\}$,
we have
$$X'_i + X'_j=Y'_i + Y'_j.$$
Denote this $2$-dimensional subspace by $S$.
Then $(B-A)|_{S^{\perp}}=0$ and
the restrictions of $A$ and $B$ to $S$ are
$$a_{i}P_{X'_i}+a_{j}P_{X'_j}\;\mbox{ and }\;a_{i}P_{Y'_i}+a_{j}P_{Y'_j},$$
respectively. 
Therefore, 
$$B-A=a_{i}(P_{Y'_i}-P_{X'_i}) + a_{j}(P_{Y'_j}-P_{X'_j}).$$
The image of this operator is $S$ and (A1) holds.
The condition (A2) follows immediately from the fact that $S$ is invariant to both $A$ and $B$.
Further, we will say that  $A,B$ are $(i,j)$-{\it adjacent}.
If $a_i$ or $a_j$ is zero, then the images of $A$ and $B$ are adjacent.
Otherwise, $A$ and $B$ have the same image. 
}\end{exmp}

\begin{theorem}\label{theorem-ad}
If $A,B\in {\mathcal G}(\sigma, d)$ are adjacent,
then they are $(i,j)$-adjacent for some distinct $i,j\in \{1,\dots,k\}$.
\end{theorem}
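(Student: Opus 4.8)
The plan is to study how $A$ and $B$ act on the two-dimensional subspace $S:={\rm Im}(B-A)$ and on its orthogonal complement $S^{\perp}={\rm Ker}(B-A)$. By (A2) both subspaces are invariant to each of the self-adjoint operators $A$ and $B$, which is the same as saying that $A$ and $B$ commute with the orthogonal projection $P_{S}$. First I would use this to split every eigenspace orthogonally as $X_t=(X_t\cap S)\oplus(X_t\cap S^{\perp})$, and similarly for $Y_t$. Since $(B-A)|_{S^{\perp}}=0$, the operators agree on $S^{\perp}$, so $X_t\cap S^{\perp}=Y_t\cap S^{\perp}$ for every $t$; thus $A$ and $B$ can differ only through the way their eigenspaces cut the plane $S$.

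The core step is to show that $A|_{S}$ and $B|_{S}$ have exactly the same two distinct eigenvalues. I would compare the multiplicities $s^{A}_t:=\dim(X_t\cap S)$ and $s^{B}_t:=\dim(Y_t\cap S)$. Adding the common summand $\dim(X_t\cap S^{\perp})$ gives $\dim X_t=\dim Y_t=n_t$, so for every index with $n_t$ finite cancellation yields $s^{A}_t=s^{B}_t$; the single possibly infinite index (attached to the eigenvalue $0$) is then forced by $\sum_t s^{A}_t=2=\sum_t s^{B}_t$. Hence $A|_{S}$ and $B|_{S}$ share a multiplicity profile on the plane $S$. They cannot both be scalar, since that would make $B-A$ vanish on $S$ and contradict (A1); so the common profile is a pair of distinct eigenvalues $a_i,a_j$ with $i\neq j$, each of multiplicity one.

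It then remains to read off $(i,j)$-adjacency. For $t\notin\{i,j\}$ one has $s^{A}_t=s^{B}_t=0$, whence $X_t=X_t\cap S^{\perp}=Y_t$. For $t\in\{i,j\}$ the lines $X'_t:=X_t\cap S$ and $Y'_t:=Y_t\cap S$ are one-dimensional, and $X_t,Y_t$ share the codimension-one subspace $X_t\cap S^{\perp}$. Orthogonality of the eigenlines of the self-adjoint operators $A|_{S}$ and $B|_{S}$ inside the plane $S$ shows that $X'_i=Y'_i$ if and only if $X'_j=Y'_j$; were this the case, all eigenspaces of $A$ and $B$ would coincide, contradicting $A\neq B$. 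Therefore $X'_i\neq Y'_i$ and $X'_j\neq Y'_j$, so $X_t$ and $Y_t$ meet in a hyperplane and are adjacent for $t\in\{i,j\}$ while being equal otherwise, which is exactly the definition of $(i,j)$-adjacency.

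The step I expect to be the main obstacle is the multiplicity count, specifically its handling of an infinite-dimensional kernel: the clean cancellation $s^A_t=s^B_t$ is available only when $n_t$ is finite, and for the infinite eigenvalue the equality must be recovered indirectly from the global constraint that the multiplicities on $S$ sum to $\dim S=2$. Establishing that $A|_S$ and $B|_S$ carry the same two eigenvalues, rather than merely differing by a rank-two operator, is what pins the difference down to a single pair of indices.
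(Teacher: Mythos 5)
Your proof is correct, but it follows a genuinely different route from the paper's. You invoke (A2) from the very first step: since $S={\rm Im}(B-A)$ and $S^{\perp}={\rm Ker}(B-A)$ are invariant to both operators, $A$ and $B$ commute with $P_{S}$, every eigenspace splits orthogonally across $S$ and $S^{\perp}$, the $S^{\perp}$-parts coincide because $A=B$ on ${\rm Ker}(B-A)$, and the whole problem collapses to two-dimensional linear algebra on the plane $S$: equal multiplicity profiles (with the global constraint $\sum_t \dim(X_t\cap S)=2$ correctly covering the single possibly infinite index), a non-scalar restriction by (A1), and the observation that in a plane the orthocomplement of a line is unique, so the two eigenlines of $A|_S$ and $B|_S$ either both agree (forcing $A=B$) or both differ (giving $(i,j)$-adjacency). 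The paper instead first extracts everything it can from (A1) \emph{alone}: by decomposing $B$ into rank-one summands and using Lemma \ref{lemma-image}, it shows via a rank count that ${\rm Im}(A)$ and ${\rm Im}(B)$ are either equal or meet in an $(n-1)$-dimensional subspace, and only then brings in (A2) to conclude $(i,j)$-adjacency, leaving that final step rather terse. The paper's route has the side benefit of isolating exactly what (A1) gives by itself (which resonates with Examples \ref{exmp-pseudo-ad1} and \ref{exmp-pseudo-ad2}, where (A1) holds but (A2) fails, and it also identifies when the pair $i,j$ involves the zero eigenvalue); your route is more self-contained and elementary, needs no auxiliary lemma, handles the infinite-dimensional kernel transparently, and explicitly identifies the pair $(i,j)$ as the spectrum of the common restriction to $S$ -- in effect you also supply the detail that the paper compresses into ``(A2) immediately implies.''
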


Theorem \ref{theorem-ad} implies that there are precisely $\binom{k}{2}$ types of adjacency. 
To prove this theorem we will need the following lemma.  

\begin{lemma}\label{lemma-image}
If $T$ and $Q$ are self-adjoint operators such that
$${\rm Im}(T)\cap {\rm Im}(Q)=0$$
then 
$${\rm Im}(T+Q)={\rm Im}(T)\dotplus{\rm Im}(Q).$$
\end{lemma}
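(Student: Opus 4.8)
The plan is to reduce the asserted identity to a statement about kernels, where the hypothesis ${\rm Im}(T)\cap{\rm Im}(Q)=0$ can be exploited directly. The inclusion ${\rm Im}(T+Q)\subseteq {\rm Im}(T)+{\rm Im}(Q)$ is immediate from $(T+Q)x=Tx+Qx$; combined with ${\rm Im}(T)\cap{\rm Im}(Q)=0$ this already shows the sum on the right is direct, so that it remains only to prove the reverse inclusion ${\rm Im}(T)\dotplus{\rm Im}(Q)\subseteq{\rm Im}(T+Q)$.

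First I would compute ${\rm Ker}(T+Q)$. If $(T+Q)x=0$, then $Tx=-Qx$ lies in ${\rm Im}(T)\cap{\rm Im}(Q)=0$, whence $Tx=Qx=0$; the reverse containment is trivial. Therefore
$${\rm Ker}(T+Q)={\rm Ker}(T)\cap{\rm Ker}(Q).$$

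Next I would pass from kernels back to images using self-adjointness. For a self-adjoint operator $S$ one has ${\rm Ker}(S)=\overline{{\rm Im}(S)}^{\perp}$, and in our setting all the relevant images are finite-dimensional, hence closed, so ${\rm Ker}(S)={\rm Im}(S)^{\perp}$ and, taking orthogonal complements, ${\rm Im}(S)={\rm Ker}(S)^{\perp}$. Applying this to $T$, $Q$ and $T+Q$ together with the kernel identity above yields
$${\rm Im}(T+Q)={\rm Ker}(T+Q)^{\perp}=\bigl({\rm Ker}(T)\cap{\rm Ker}(Q)\bigr)^{\perp}=\bigl({\rm Im}(T)^{\perp}\cap{\rm Im}(Q)^{\perp}\bigr)^{\perp}={\rm Im}(T)+{\rm Im}(Q),$$
where the last equality uses the identity $\bigl(U^{\perp}\cap W^{\perp}\bigr)^{\perp}=\overline{U+W}$ and finite-dimensionality to discard the closure.

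The step I expect to be the main obstacle is the bookkeeping around closures, rather than any deep idea: the equality ${\rm Im}(S)={\rm Ker}(S)^{\perp}$ for self-adjoint $S$ holds literally only when ${\rm Im}(S)$ is closed, and the final double orthocomplement produces a closure that must then be removed. Both points are harmless here because every operator in play is of finite rank, so the subspaces ${\rm Im}(T)$, ${\rm Im}(Q)$, ${\rm Im}(T)+{\rm Im}(Q)$ and ${\rm Im}(T+Q)$ are all finite-dimensional and therefore closed; I would record these closedness facts at the outset so that the orthogonal-complement manipulations are fully justified.
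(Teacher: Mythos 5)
Your proof is correct and follows essentially the same route as the paper's: both compute ${\rm Ker}(T+Q)={\rm Ker}(T)\cap{\rm Ker}(Q)$ from the hypothesis ${\rm Im}(T)\cap{\rm Im}(Q)=0$ and then pass to orthogonal complements via self-adjointness to recover the image. Your version is in fact slightly more careful, since you spell out the kernel computation and the closedness issues that the paper leaves implicit (justified there, as here, by the finite-rank setting).
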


\begin{proof}
Suppose that ${\rm Im}(T)=S$ and  ${\rm Im}(Q)=U$.
The kernels of $T$ and $Q$ are $S^{\perp}$ and $U^{\perp}$, respectively.
The condition $S\cap U=0$ guarantees that 
$${\rm Ker} (T+Q)={\rm Ker}(T)\cap {\rm Ker}(Q)=S^{\perp}\cap U^{\perp}.$$
Then 
$${\rm Im}(T+Q)=({\rm Ker} (T+Q))^{\perp}=(S^{\perp}\cap U^{\perp})^{\perp}=S\dotplus U$$
which gives the claim.
\end{proof}

\begin{proof}[Proof of Theorem \ref{theorem-ad}]
It is sufficient to show that (A1) guarantees that one of the following possibilities is realized:
\begin{enumerate}
\item[$\bullet$] ${\rm Im}(A)={\rm Im}(B)$;
\item[$\bullet$] ${\rm Im}(A)\cap {\rm Im}(B)$ is $(n-1)$-dimensional.
\end{enumerate}
Then (A2) immediately  implies that the operators are $(i,j)$-adjacent
for some distinct $i,j\in \{1,\dots,k\}$.
In the first case, $a_i$ and $a_j$ both are non-zero;
in the second, precisely one of them is zero.

Suppose that 
$$\dim({\rm Im}(A)\cap {\rm Im}(B))=n-m$$
and $m\ge 2$.
The image ${\rm Im}(B)$ is the orthogonal sum of $1$-dimensional subspaces $Z_1,\dots,Z_n$
such that $B(Z_i)=Z_i$ for every $i\in \{1,\dots,n\}$.
Denote by $c_j$ the eigenvalue of $B$ corresponding to $Z_j$ (it is possible that $c_j =c_i$ for distinct $i,j$).
Then $$B=P_1 +\dots +P_{n},$$ where $P_i=c_iP_{Z_i}$. 
Show that there are $m$ distinct indices $i$ such that the sum of the corresponding $Z_i$ intersects 
${\rm Im}(A)\cap {\rm Im}(B)$ precisely in zero. 

Let us take any index set $\{i_1,\dots,i_t\}$ maximal with respect to the property that 
$Z_{i_{1}}+\dots+ Z_{i_t}$ intersects ${\rm Im}(A)\cap {\rm Im}(B)$  in zero. 
Then for every $i\not\in \{i_1,\dots,i_t\}$ we have
$$(Z_{i_{1}}+\dots+ Z_{i_t}+Z_i)\cap{\rm Im}(A)\cap {\rm Im}(B)\ne 0$$
which means that $Z_i$ is contained in 
$$(Z_{i_{1}}+\dots+ Z_{i_t})+({\rm Im}(A)\cap {\rm Im}(B))$$
and the latter subspace coincides with ${\rm Im}(B)$. 
Therefore, $t=m$.

Without loss of generality, we assume that these $m$ indices are $1,\dots,m$. 
Then, by Lemma \ref{lemma-image}, 
$$A-(P_{1}+\dots+P_{m})$$
is an operator of rank $n+m$.
The image of the operator
$$(A-B)+P_{m+1}+\dots+P_{n}$$
is contained in the subspace 
$${\rm Im}(A-B)+Z_{m+1}+\dots+Z_{n}$$
whose dimension is not greater than $n-m+2$
(recall that $A-B$ is an operator of rank $2$).
Since 
$$A-(P_{1}+\dots+P_{m})=(A-B)+P_{m+1}+\dots+P_{n},$$
we obtain that $n+m\le n-m+2$ which implies that $m\le 1$.
\end{proof}

There is one special case where the adjacency relation is trivial.
Suppose that $d=\{n_1,n_2\}$. Then $n_1 +n_2=\dim H$ ($H$ is not assumed to be finite-dimensional).
If one of $n_i$, say $n_1$, is equal to $1$, 
then the eigenspaces corresponding to $a_2$ are hyperplanes of $H$
(if $H$ is infinite-dimensional, then $a_2=0$).
Hence, it is seen that any two distinct operators from ${\mathcal G}(\sigma,d)$ are adjacent.

In the remaining cases ${\mathcal G}(\sigma,d)$ contains pairs of non-adjacent operators.

The following two examples show that (A1) does not imply (A2).

\begin{exmp}\label{exmp-pseudo-ad1}{\rm
Let $\{e_1, e_2, e_3\}$ be the canonical basis of ${\mathbb C}^3$
and let $P$ be the projection on the $1$-dimensional subspace containing $e_1$. 
Denote by $X$ and $Y$ the $1$-dimensional subspaces containing $e_1+e_2$ and $e_1+e_3$,
respectively. The matrices of the projections $P_X$ and $P_Y$ are
$$
    \frac{1}{2}\begin{bmatrix}
      1 & 1 & 0 \\
      1 & 1 & 0 \\ 
      0 & 0 & 0 
    \end{bmatrix}\qquad\text{and}\qquad
     \frac{1}{2} \begin{bmatrix}
      1& 0 & 1 \\
      0 & 0 & 0 \\
      1 & 0 & 1 
    \end{bmatrix},
$$
respectively.
The matrices of the operators 
$$A=P+P_X\;\mbox{ and }\;B=P+P_Y$$ are
$$
    \frac{1}{2}\begin{bmatrix}
      3 & 1 & 0 \\
      1 & 1 & 0 \\ 
      0 & 0 & 0 
    \end{bmatrix}\qquad\text{and}\qquad
     \frac{1}{2} \begin{bmatrix}
      3& 0 & 1 \\
      0 & 0 & 0 \\
      1 & 0 & 1 
    \end{bmatrix},
$$
respectively.
Let $U$ be the unitary operator which leaves $e_1$ fixed and transposes $e_2,e_3$.
Then $B=UAU^{*}$ and the operators $A,B$ belong to the same conjugacy class.
The images of $A$ and $B$ are the $2$-dimensional subspaces
spanned by $e_1,e_1+e_2$ and $e_1,e_1+e_3$, respectively.
The image of $$B-A=P_Y-P_{X}$$ is the $2$-dimensional subspace $X+Y$, i.e. (A1) is satisfied.
The kernel of this operator is the $1$-dimensional subspace containing $u=-e_1+e_2+e_3$
which is non-invariant to the operators $A,B$
(we have $A(u)=B(u)=-e_1$).
If $Z$ is the $1$-dimensional subspace containing $2e_1+e_2 +e_3\in X+Y$,
then $U(Z)=Z$ and the operators 
$$A'=A+P_Z\;\mbox{ and }\;B'=B+P_{Z}$$
belong to the same conjugacy class. The images of these operators coincide with ${\mathbb C}^3$.
We have $$B'-A'=B-A$$ which means that 
$B'-A'$ is an operator of rank $2$ whose kernel is the $1$-dimensional subspace containing $u=-e_1+e_2+e_3$.
This subspace is non-invariant to the operators $A',B'$
(since $P_{Z}(u)=0$ and we have $A'(u)=B'(u)=B(u)=-e_1$).
}\end{exmp}

The previous example can be generalized as follows.

\begin{exmp}\label{exmp-pseudo-ad2}{\rm
Let $C$ be a finite-rank self-adjoint operator.
We take two subspace $M,N$ such that
$$M\cap N= {\rm Im}(C)$$
is a hyperplane in both $M,N$. 
Let $X$ be a $1$-dimensional subspace of $M$ which is not contained in $M\cap N$
and let $a$ be a  non-zero real scalar.
The image of the operator
$$A=C+aP_{X}$$
is $M$.
Let $U$ be a unitary operator on $H$ which transfers $M$ to $N$
and leaves every vector from $M\cap N$ fixed.
We take $Y=U(X)$ and consider the operator
$$B=C+aP_{Y}$$
whose image is $N$.
Since $B=UAU^{*}$, the operators $A,B$ belong to the same conjugacy class. 
We have
$$B-A=a(P_{Y}-P_{X})$$
thus (A1) holds. Suppose that $X$ is non-orthogonal to $M\cap N$. 
Then $Y$ is also  non-orthogonal to $M\cap N$ and 
$$X^{\perp}\cap(M\cap N)=Y^{\perp}\cap(M\cap N)$$
is a hyperplane of $M\cap N$; we denote it  by $O$.
For every $x\in (M\cap N)\setminus O$ we have
$A(x)= C(x)+ax'$, where $x'$ is the orthogonal projection of $x$ on $X$.
Since $x$ is non-orthogonal to $X$, the vector $x'$ is non-zero
and $A(x)$ does not belong to $M\cap N$, i.e.  $x$ is not an eigenvector of $A$.
Similarly, we establish that every $x\in (M\cap N)\setminus O$ is not an eigenvector of $B$.
Therefore, an eigenvector of $C$ is an eigenvector of $A$ and $B$ only in the case when  it belongs to $O$.
This means that $A,B$ are not adjacent and the condition (A2) fails.
For example, if all eigenspaces of $C$ are $1$-dimensional, then
we can choose $X$ such that $O$ contains no non-zero eigenvector of $C$
which means that $A$ and $B$ does not contain common non-zero eigenvectors. 
The  $1$-dimensional  subspace 
$$Z=(M\cap N)^{\perp}\cap (X+Y)$$
is invariant to $U$ (since $U$ preserves $X+Y$ and $M\cap N$)
and $Z^{\perp}$ intersects $M\cap N$ precisely in $O$. 
For any non-zero real $b$ the operators
$$A'=A+bP_{Z}\;\mbox{ and }\;B'=B+bP_Z$$ 
belong to the same conjugacy class. The images of these operators coincide with $M+N$.
The image of $$B'-A'=B-A$$ is 
$X+Y$ and the kernel is $(X+Y)^{\perp}$.
Since $(X+Y)^{\perp}$ is not invariant to $A,B$ and $(X+Y)^{\perp}$ is orthogonal to $Z$,
$(X+Y)^{\perp}$ is not invariant to $A',B'$.
}\end{exmp}

\section{Graphs associated to conjugacy classes}
Denote by $\Gamma(\sigma, d)$ the simple graph whose vertices
are operators from ${\mathcal G}(\sigma,d)$ and two operators are connected by an edge if they are adjacent.
In the next section, we prove the following.

\begin{theorem}\label{theorem-conn}
The graph $\Gamma(\sigma, d)$ is  connected.
\end{theorem}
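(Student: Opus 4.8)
The plan is to induct on the number $k$ of eigenvalues, using Theorem \ref{theorem-ad} to reduce every edge to an $(i,j)$-adjacency and Example \ref{exmp-jp} to read off what such an edge does: it fixes all eigenspaces except the $i$-th and the $j$-th, replacing $X_i$ by an adjacent subspace inside the \emph{fixed} subspace $X_i\oplus X_j$ (the partner $X_j$ being replaced by $(X_i^{\mathrm{new}})^{\perp}\cap(X_i\oplus X_j)$, which is automatically adjacent to $X_j$, cf. Example \ref{exmp-pr}). For the base case $k=2$ an operator is determined by its first eigenspace, since $X_2=X_1^{\perp}$, and two operators are adjacent exactly when $X_1,Y_1$ are adjacent $n_1$-dimensional subspaces; hence $\Gamma(\sigma,d)$ is isomorphic to the Grassmann graph $\Gamma_{n_1}(H)$, which is connected \cite[Proposition 2.11]{Pankov-book} (in the degenerate situations $n_1\in\{1,\dim H-1\}$ it is even a complete graph). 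The case $k=1$ gives a single vertex.

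For the inductive step I assume the theorem for every conjugacy class with fewer than $k$ eigenvalues, in any Hilbert space, and I single out an index $k$ with $a_k\neq 0$, so that $n_k$ is finite. Given $A,B$ with eigenspaces $X_i,Y_i$, the argument has two stages. Stage~2 is routine: once the $k$-th eigenspace of $A$ has been moved onto $Y_k$, both operators share the eigenspace $Y_k$ and their remaining eigenspaces live in $Y_k^{\perp}$, a Hilbert space carrying a class with the $k-1$ eigenvalues $a_1,\dots,a_{k-1}$. The inductive hypothesis connects the two operators there, and since every $(i,j)$-adjacency with $i,j\neq k$ inside $Y_k^{\perp}$ lifts verbatim to an edge of $\Gamma(\sigma,d)$ that keeps the $k$-th eigenspace equal to $Y_k$, the resulting path is a path in $\Gamma(\sigma,d)$.

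The heart of the proof, and the step I expect to be the main obstacle, is Stage~1: moving $X_k$ onto $Y_k$ by edges. Because $\Gamma_{n_k}(H)$ is connected, fix a sequence $X_k=Z_0,Z_1,\dots,Z_r=Y_k$ of pairwise adjacent $n_k$-dimensional subspaces; it then suffices to realize each single Grassmann step $Z\to Z'$ by a path in $\Gamma(\sigma,d)$, where $Z=Z_{t-1}$ is the current $k$-th eigenspace and $W_j$ ($j\neq k$) are the current remaining eigenspaces (Stage~1 ignores where these $W_j$ end up). Write the incoming direction $m=Z'\ominus(Z\cap Z')$ as $m=m_Z+m_{\perp}$ with $m_Z\in Z$ and $m_{\perp}\in Z^{\perp}=\bigoplus_{j\neq k}W_j$; here $m_{\perp}\neq 0$, for otherwise $Z'=Z$. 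A direct check, using that the other new eigenspaces must stay orthogonal to $Z'$, shows that the step $Z\to Z'$ is a single $(k,j)$-adjacency precisely when $m_{\perp}\in W_j$ for some $j$. To force this I first reconfigure the eigenspaces $W_j$ inside $Z^{\perp}$, keeping $Z$ fixed, so that some $W_{j_0}$ contains the line $\mathbb{C}m_{\perp}$: such a configuration exists since $n_{j_0}\geq 1$, and it is reachable from the current one by the inductive hypothesis applied to the $(k-1)$-eigenvalue class on $Z^{\perp}$, a path of $(i,j)$-edges with $i,j\neq k$ each of which fixes $Z$ and is therefore an edge of $\Gamma(\sigma,d)$. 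One further $(k,j_0)$-edge then sends $Z$ to $Z'$, and chaining these realizations along $Z_0,\dots,Z_r$ completes Stage~1.

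Two points need care, and both are easily dispatched. First, the distinguished eigenspace must be finite-dimensional so that $\Gamma_{n_k}(H)$ is a genuine Grassmann graph; this is guaranteed by the choice $a_k\neq 0$, legitimate since at most one eigenvalue is zero and $k\geq 3$ in the inductive step. Second, the reconfiguration class on $Z^{\perp}$ must never be the excluded trivial one-operator class; when only a single eigenvalue remains on $Z^{\perp}$ the reconfiguration is vacuous because $m_{\perp}$ already lies in the unique $W_j$, so the realization of the Grassmann step reduces to a single $(k,j)$-edge.
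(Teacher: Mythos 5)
Your proof is correct, but it is organized quite differently from the paper's, so it is worth spelling out the comparison. The engine is the same in both arguments: connectedness of Grassmann graphs, together with the trick of first repositioning a neighbouring eigenspace so that it contains the incoming direction, and only then performing a single $(i,j)$-edge. Your realization criterion (the step $Z\to Z'$ is a single $(k,j)$-edge precisely when $m_{\perp}\in W_j$) is exactly the mechanism the paper uses implicitly. What differs is the scaffolding. You induct on the number $k$ of eigenvalues: Stage~1 drags the $a_k$-eigenspace through the ambient graph $\Gamma_{n_k}(H)$, realizing each Grassmann step by a reconfiguration path (inductive hypothesis applied on $Z^{\perp}$) followed by one $(k,j_0)$-edge, and Stage~2 recurses on $Y_k^{\perp}$. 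The paper instead reduces the number of indices at which $A$ and $B$ have distinct eigenspaces: Lemma \ref{lemma-krzys} settles the case of at most two differing indices directly (inside $M=X_i+X_j$ every Grassmann step is already a single edge, no repositioning needed), and when exactly three indices differ, the $a_3$-eigenspace is moved one step at a time inside $N=X_1+X_2+X_3$ by an explicit pair of edges --- a $(1,2)$-edge placing the line $(X+X_3)\cap(X_1+X_2)$ into the $a_2$-eigenspace, then a $(2,3)$-edge --- after which only two indices differ; the general case follows recursively. The trade-off: the paper's route produces, as a by-product, the exact characterization of $(i,j)$-connected components (Lemma \ref{lemma-krzys}), which is not recoverable from your argument yet is essential infrastructure later (Lemma \ref{lemma-ij-conn}, the families $\mathfrak{G}_{ij}$, and the proof of Theorem \ref{theorem-main}); moreover its repositioning is one explicit edge rather than a path supplied by an inductive hypothesis. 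Your route buys a cleaner, self-contained induction on $k$ that avoids any case analysis on how many eigenspaces differ, at the cost of longer paths and of invoking the induction twice (reconfiguration and Stage~2). One cosmetic point: in your base case $k=2$ you should say that, without loss of generality, $n_1$ is finite before identifying $\Gamma(\sigma,d)$ with $\Gamma_{n_1}(H)$; and note that your worry about the trivial one-operator class on $Z^{\perp}$ never actually arises, since in the inductive step $k\ge 3$ forces at least two eigenvalues to remain on $Z^{\perp}$.
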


Our main result deals with automorphisms of the graph $\Gamma(\sigma,d)$.
It is natural to exclude the case when any two distinct operators from ${\mathcal G}(\sigma,d)$ are adjacent. 

\begin{exmp}\label{exmp-main}{\rm
If $U$ is a unitary or anti-unitary operator on $H$, then the transformation 
sending every $A\in {\mathcal G}(\sigma, d)$ to $UAU^*$   is an automorphism of $\Gamma(\sigma, d)$
preserving each type of adjacency.
}\end{exmp}

\begin{exmp}{\rm
Let $\delta$ be a non-trivial element of $S(d)$.
Then the transformation sending every 
$A\in{\mathcal G}(\sigma, d)$ to $\delta(A)$ is an automorphism of $\Gamma(\sigma,d)$
which does not preserve all types of adjacency.
For any unitary or anti-unitary operator $U$ on $H$
we have 
$$U\delta(A)U^{*}=\delta(UAU^{*})$$
for all $A\in {\mathcal G}(\sigma,d)$.
}\end{exmp}

\begin{theorem}\label{theorem-main}
Suppose that $k\ge 3$ and $n_i>1$ for all $i\in \{1,\dots,k\}$.
Then for every automorphism $f$ of $\Gamma(\sigma, d)$,
there are a unitary or anti-unitary operator $U$ on $H$ and a permutation  $\delta\in S(d)$
such that 
$$f(A)=U\delta(A)U^{*}$$
for all $A\in {\mathcal G}(\sigma,d)$.
In particular, every automorphism of $\Gamma(\sigma, d)$ preserving each type of adjacency
is induced by a unitary or anti-unitary operator on $H$.  
\end{theorem}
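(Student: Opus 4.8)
The plan is to reduce the problem to Chow's theorem applied on the ``slices'' of $\Gamma(\sigma,d)$ determined by pairs of eigenvalues, and then to glue the resulting semilinear maps into a single semilinear automorphism of $H$ that preserves orthogonality. For a fixed pair of distinct indices $i,j$, consider the subgraph whose edges are the $(i,j)$-adjacent pairs (Example \ref{exmp-jp}), and take one of its connected components. By Theorem \ref{theorem-ad} every edge has a well-defined type, and within such a component every operator shares the same eigenspaces $X_t$ for $t\notin\{i,j\}$; writing $W$ for the orthogonal complement of $\sum_{t\notin\{i,j\}}X_t$, the component is identified with the Hilbert Grassmannian ${\mathcal G}_{n_i}(W)$ by sending an operator to its $a_i$-eigenspace, and $(i,j)$-adjacency becomes adjacency in the Grassmann graph $\Gamma_{n_i}(W)$. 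Since $\dim W=n_i+n_j$ and the hypothesis $n_i,n_j>1$ gives $1<n_i<\dim W-1$, this is a genuine (non-complete) Grassmann graph, to which Chow's theorem applies. Collapsing each $(i,j)$-component to a point amounts to merging $a_i$ and $a_j$, which yields a conjugacy class with $k-1$ eigenvalues; this provides an inductive organization of what follows.

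First I would analyze the maximal cliques of $\Gamma(\sigma,d)$. I expect that each maximal clique is a star or a top of a single $(i,j)$-component, and that the analogues of (C1) and (C2) hold. The point is that two edges of different types issuing from a common vertex do not close up into a triangle, so no maximal clique mixes types, and stars and tops of a component remain maximal in the whole graph. Consequently the type-partition of the edges, and with it the family of $(i,j)$-components, is recoverable from the combinatorics of maximal cliques and lines; hence an automorphism $f$ must carry the family of $(i,j)$-components onto the family of $(\delta(i),\delta(j))$-components for some permutation $\delta$ of $\{1,\dots,k\}$ that respects dimensions, i.e. $\delta\in S(d)$. Composing $f$ with the automorphism induced by $\delta^{-1}$, we may assume that $f$ preserves each type of adjacency and hence maps every $(i,j)$-component isomorphically onto an $(i,j)$-component.

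Next, on each component $f$ restricts to an isomorphism of Grassmann graphs, so Chow's theorem yields a semilinear map of the corresponding space $W$; the second Chow alternative $X\mapsto V(X)^{\perp}$, which arises only when $\dim W=2n_i$ (that is, $n_i=n_j$), corresponds precisely to interchanging the roles of the $a_i$- and $a_j$-eigenspaces and is therefore already accounted for by the permutation $\delta\in S(d)$. The crucial and hardest step is to glue these local maps into a single semilinear automorphism $V\colon H\to H$. Here I would exploit that a one-dimensional subspace of $H$ lies in many of the spaces $W$ simultaneously, as the fixed eigenspaces $X_t$ vary, and that by connectedness (Theorem \ref{theorem-conn}) and the assumption $k\ge 3$ any two components can be joined through a chain of components sharing a common eigenspace. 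This rigidity should force the field automorphisms and the linear parts of the local maps to coincide, producing a $V$ that is well defined on all of $H$ and induces $f$.

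Finally, I would show that $V$ preserves orthogonality and invoke Remark \ref{rem-ortho} to conclude that $V$ is a scalar multiple of a unitary or anti-unitary operator $U$, whence $f(A)=UAU^{*}$ after the reduction, that is $f(A)=U\delta(A)U^{*}$ before it. This is exactly where $k\ge 3$ is indispensable: a single Grassmann slice $(k=2)$ admits Chow automorphisms that do not preserve orthogonality, but once every operator carries at least three mutually orthogonal eigenspaces one can use a third eigenspace $X_l$ as a witness. The constraint that the varying $a_i$- and $a_j$-eigenspaces stay orthogonal to the common $X_l$, transported by $V$ across overlapping components, should pin down orthogonality of subspaces and rule out the exotic field automorphisms. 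I expect the gluing of the local Chow maps together with this orthogonality argument to be the principal obstacle; the clique analysis and the appeals to Chow are comparatively routine.
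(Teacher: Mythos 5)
Your setup---the $(i,j)$-components, their identification with Grassmann graphs $\Gamma_{n_i}(W)$ on $(n_i+n_j)$-dimensional slices, the clique analysis, and the final appeal to Remark \ref{rem-ortho}---matches the paper's ingredients, but the heart of your argument, namely applying Chow's theorem separately on each component and then \emph{gluing} the resulting local semilinear maps into one semilinear automorphism of $H$, is exactly the step you leave as a hope (``should force'', ``I expect''), and it is a genuine gap rather than a routine verification. The obstruction is structural: the local Chow maps live on the slices $W$ and are determined only up to a scalar, each with its own field automorphism, while two distinct components of the same type are \emph{disjoint} as sets of operators, and components of different types meet in at most one operator (the lemma on the families $\mathfrak{G}_{ij}$). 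So there is no common set of vertices on which two local maps can be compared directly; any consistency argument has to be routed through cross-component adjacency (Lemma \ref{comp-ad}), and you give no indication of how to do this. Two further assertions in your sketch are also unproved: (i) that for $k\ge 4$ the induced bijection on types $\{i,j\}\mapsto\{i',j'\}$ comes from a permutation $\delta$ of $\{1,\dots,k\}$ --- for $k=3$ this is automatic, since every bijection of the $2$-subsets of a $3$-set is induced by a permutation, but for $k\ge4$ this fails for abstract bijections of $2$-subsets and must be extracted from the graph; and (ii) that when $n_i=n_j$ the dual Chow alternative $X\mapsto V(X)^{\perp}$ is ``already accounted for by $\delta$'' --- $\delta$ acts globally, whereas the dual twist could a priori occur in some $(i,j)$-components and not in others, so its consistency across components is itself something to prove.

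The paper sidesteps all three problems by running the reduction in the opposite direction. Instead of applying Chow on each slice, it collapses a pair of eigenvalues: the family of $(1,2)$-components is itself a conjugacy class ${\mathcal G}((\sigma,d)^{n_1}_{-1,+2})$ with $k-1$ eigenvalues, on which $f$ induces an automorphism (Lemmas \ref{lemma1} and \ref{comp-ad}). Iterating down to $k=2$, Chow's theorem is invoked \emph{once}, for the Grassmannian of the whole space $H$ (Lemma \ref{lemma-chow}), which produces a single global semilinear $V$ with no gluing at all; the permutation $\delta$ is assembled recursively along the way, which is how issue (i) is resolved, and the case analysis in the recursion ($t=1$ versus $t=2$) is how issue (ii) is resolved. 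The remaining work is a lifting argument --- hyperplanes of $M_T$ combined with $(2,3)$-adjacency, which is precisely where $k\ge 3$ enters --- showing that this global $V$ tracks the individual eigenspaces of every operator; orthogonality of eigenspaces then forces $V$ to preserve orthogonality, and Remark \ref{rem-ortho} finishes. If you want to salvage your plan, the cleanest fix is to replace your gluing step by this collapse-then-lift scheme.
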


The above statement fails for $k=2$ and the
arguments used in its proof do not work when some of $n_i$ are equal
to $1$. See Example \ref{exmp-ad2} and Remark \ref{fin-rem} for more detailed explanations.

\begin{exmp}\label{exmp-ad2}{\rm
Assume that $k=2$. Then $d=\{n_1,n_2\}$ and at least one of $n_1,n_2$, say $n_1$, is finite.  
There is only one type of adjacency and $\Gamma(\sigma, d)$ is isomorphic to $\Gamma_{n_1}(H)$
(since each operator from ${\mathcal G}(\sigma,d)$ is uniquely determined by the eigenspace corresponding to $a_1$).
Let $V$ be a semilinear automorphism of $H$. 
If $A$ is an operator from ${\mathcal G}(\sigma, d)$ whose eigenspace corresponding to $a_i$ is $X_i$,
then we define $f_{V}(A)$ as the operator from ${\mathcal G}(\sigma, d)$ 
whose eigenspaces corresponding to $a_1$ and $a_2$ are 
$$V(X_1)\;\mbox{ and }\; V(X_1)^{\perp},$$
respectively.
The transformation $f_{V}$ is an automorphism of ${\mathcal G}(\sigma, d)$.
If $n_1=n_2$, then the group $S(d)$ coincides with $S_2$ and contains a unique non-identity element $\delta$;
the transformation sending every $A\in {\mathcal G}(\sigma, d)$ to $\delta(A)$  is an automorphism of 
$\Gamma(\sigma, d)$. 
It will be proved later (Lemma \ref{lemma-chow})  that
if $k=2$ and $f$ is an automorphism of $\Gamma(\sigma,d)$, then $f=f_{V}\delta$,
where $V$ is a semilinear automorphism of $H$ and $\delta\in S(d)$.
}\end{exmp}

\section{Connectedness}
Let $A,B\in {\mathcal G}(\sigma,d)$.
As above, the eigenspaces of $A$ and $B$ corresponding to $a_i$ 
are denoted by $X_i$ and $Y_{i}$, respectively. 
We say that $A,B$ are $(i,j)$-{\it connected} if there is a sequence of operators
\begin{equation}\label{eq-conn}
A=C_{0},C_{1},\dots, C_{m}=B,
\end{equation}
where $C_{t-1},C_{t}$ are $(i,j)$-adjacent for all $t\in \{1,\dots,m\}$.

\begin{lemma}\label{lemma-krzys}
Two operators from ${\mathcal G}(\sigma,d)$ are $(i,j)$-connected if and only if 
for every $p\not\in\{i,j\}$ the operators have the same eigenspace corresponding to $a_p$.
\end{lemma}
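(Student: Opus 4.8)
The plan is to prove both implications separately; the forward one is routine, while the reverse one I would reduce to connectedness of an ordinary Grassmann graph.

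For the forward implication, suppose $A,B$ are $(i,j)$-connected by a sequence $A=C_0,C_1,\dots,C_m=B$ of $(i,j)$-adjacent operators. By the description of $(i,j)$-adjacency in Example \ref{exmp-jp}, for every $p\notin\{i,j\}$ the eigenspace of $C_{t-1}$ corresponding to $a_p$ equals that of $C_t$. Iterating along the chain gives $X_p=Y_p$ for all $p\notin\{i,j\}$, as required.

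For the reverse implication I would assume $X_p=Y_p$ for all $p\notin\{i,j\}$ and set $W=\sum_{p\notin\{i,j\}}X_p$. Since the eigenspaces of a self-adjoint operator are mutually orthogonal and span $H$, we have $W^{\perp}=X_i\oplus X_j=Y_i\oplus Y_j$, and both $A,B$ restrict to the same operator on $W$. On $W^{\perp}$ an operator of the conjugacy class is completely determined by its $a_i$-eigenspace; assuming without loss of generality that $n_i$ is finite (at most one eigenspace, the kernel, can be infinite-dimensional), this eigenspace is an $n_i$-dimensional subspace of $W^{\perp}$. Thus the operators that agree with $A$ off $\{i,j\}$ are parametrized by the Grassmannian $\mathcal{G}_{n_i}(W^{\perp})$, with $A\mapsto X_i$ and $B\mapsto Y_i$.

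The key point I would verify is that under this identification $(i,j)$-adjacency corresponds exactly to adjacency in the Grassmann graph $\Gamma_{n_i}(W^{\perp})$. If $X_i,Y_i$ are adjacent $n_i$-dimensional subspaces of $W^{\perp}$, then $\dim(X_i+Y_i)=n_i+1$, so their orthogonal complements $X_j,Y_j$ inside $W^{\perp}$ satisfy $\dim(X_j\cap Y_j)=\dim W^{\perp}-(n_i+1)=n_j-1$ and are adjacent as well; a parallel codimension count handles the case where $W^{\perp}$ is infinite-dimensional. Hence both conditions of Example \ref{exmp-jp} hold, and consecutive operators are genuinely $(i,j)$-adjacent whenever their $a_i$-eigenspaces are adjacent. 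I would then invoke connectedness of $\Gamma_{n_i}(W^{\perp})$ (\cite[Proposition 2.11]{Pankov-book}; the degenerate cases $n_i=1$ or $n_j=1$ give a complete graph, hence are trivially connected) to obtain a path $X_i=Z_0,Z_1,\dots,Z_m=Y_i$ of pairwise adjacent subspaces, and lift each $Z_t$ to the operator with $a_i$-eigenspace $Z_t$, $a_j$-eigenspace $Z_t^{\perp}\cap W^{\perp}$, and $a_p$-eigenspace $X_p$ for $p\notin\{i,j\}$. This yields the required chain of $(i,j)$-adjacent operators from $A$ to $B$.

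The step I expect to be the main obstacle is the reduction to $W^{\perp}$ together with the verification that adjacency of the $a_i$-eigenspaces forces adjacency of the complementary $a_j$-eigenspaces; once this is established, the remainder is bookkeeping plus the known connectedness of Grassmann graphs. The infinite-dimensional case (an infinite-dimensional kernel) is the only place where dimension counts must be replaced by codimension counts, but the same argument applies.
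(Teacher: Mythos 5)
Your proposal is correct and takes essentially the same route as the paper's own proof: both directions match, and in the reverse implication you, like the paper, pass to the orthogonal complement $M=W^{\perp}$ of the common eigenspaces, identify the relevant operators with points of ${\mathcal G}_{n_i}(M)$, and lift a path in the connected Grassmann graph $\Gamma_{n_i}(M)$ to a chain of $(i,j)$-adjacent operators ending at $B$. Your explicit dimension count showing that adjacency of the $a_i$-eigenspaces forces adjacency of their orthogonal complements in $M$ is exactly the fact the paper asserts (without computation) when it states that the complement $Y$ of $X$ in $M$ is adjacent to $X_j$.
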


\begin{proof}
If in the sequence \eqref{eq-conn} any two consecutive  operators are $(i,j)$-adjacent,
then for every $C_t$ the eigenspace corresponding to $a_p$ with $p\not\in \{i,j\}$ is $X_p$;
in particular, $X_{p}=Y_{p}$ for all $p\not\in \{i,j\}$.

Conversely, suppose that $X_p\ne Y_p$ precisely for $p\in\{i,j\}$.
Then
$$X_{i}+X_{j}=Y_{i}+Y_{j}$$
is the orthogonal complement of the sum of all $X_{p}=Y_{p}$ with $p\not\in \{i,j\}$ 
and we denote this subspace by $M$.
If $X$ is an $n_i$-dimensional subspace of $M$ adjacent to $X_i$
and $Y$ is the orthogonal complement of $X$ in $M$,
then $Y$ is adjacent to $X_j$ and
the operator
$$a_{1}P_{X}+a_{2}P_{Y}+\sum_{p\not\in\{i,j\}}a_{p}P_{X_p}$$
is  $(i,j)$-adjacent to $A$.
The subspaces $X_{i}$ and $Y_{i}$ are connected in the Grassmann graph $\Gamma_{n_{i}}(M)$,
i.e. there is a sequence 
$$X_{i}=Z_{0},Z_{1},\dots, Z_{m}=Y_i$$
formed by $n_i$-dimensional subspaces of $M$ such that $Z_{t-1},Z_{t}$ are adjacent 
for every $t\in \{1,\dots,m\}$.
We construct a sequence of operators
$$A=C_{0},C_{1},\dots, C_{m}$$ 
such that $C_{t-1},C_{t}$ are $(i,j)$-adjacent for all $t\in \{1,\dots,m\}$
and every $Z_t$ is the eigenspace of $C_t$ corresponding to $a_i$.
For every $C_t$ the sum of the eigenspaces corresponding to $a_i$ and $a_j$ coincides with $M$. 
Since $Y_i$ is the eigenspace of $C_m$ corresponding to $a_i$, 
the eigenspace of $C_m$ corresponding to $a_j$ is $Y_j$. 
If $p\not\in\{i,j\}$, then $Y_p=X_p$ is the eigenspace of $C_m$ corresponding to $a_p$
(since $A$ and $C_m$ are $(i,j)$-connected).
Therefore, $C_m=B$ and $A,B$ are $(i,j)$-connected.
\end{proof}

\begin{proof}[Proof of Theorem \ref{theorem-conn}]
By Lemma \ref{lemma-krzys}, we need to look at the case when 
$X_i$ and $Y_i$ are distinct for at least three indices.
Suppose  that $X_i\ne Y_i$ for precisely three indices;
without loss of generality, we assume that these indices are $1,2,3$.  
Denote by $N$ the subspace
$$X_{1}+X_{2}+X_3=Y_{1}+Y_{2}+Y_3$$
which is the orthogonal complement of the sum of all $X_{i}=Y_{i}$ with $i>3$.
Let us start showing that for every $n_3$-dimensional subspace $X\subset N$ adjacent to $X_3$
there is an operator from ${\mathcal G}(\sigma,d)$ connected with $A$ and 
whose eigenspace corresponding to $a_3$ is $X$.  

To do so, observe that the $(n_3+1)$-dimensional subspace $X+X_3$ intersects $X_1+X_2$
in a certain $1$-dimensional subspace $Z'$.
We take any $n_2$-dimensional subspace $Z\subset X_1+X_2$ containing $Z'$ and adjacent to $X_2$.
There is an operator $A'\in {\mathcal G}(\sigma, d)$ which is $(1,2)$-adjacent to $A$
and such that $Z$ is the eigenspace of $A'$ corresponding to $a_2$
(see the proof of Lemma \ref{lemma-krzys}).
The $(n_2 + n_3)$-dimensional subspace $Z+X_3$ contains $X$.
If $Y$ is the orthogonal complement of $X$ in $Z+X_3$, then $Y$ is adjacent to $Z$
(since $X,X_3$ are adjacent and $Z\perp X_3$).
There is an operator $A''\in {\mathcal G}(\sigma,d)$ which is $(2,3)$-adjacent to $A'$
and such that $Y$ and $X$ are the eigenspaces of $A''$ corresponding to $a_2$ and $a_{3}$.
The operators $A$ and $A''$ are connected and we get the claim.

Now, using the connectedness of the Grassmann graph $\Gamma_{n_3}(N)$,
we construct an operator $B'\in {\mathcal G}(\sigma,d)$ connected with $A$
and such that $Y_3$ is the eigenspace of $B'$ corresponding to $a_3$. 
It must be pointed out that in the sequence of operators connecting $A$ and $B'$  
any two consecutive operators are $(1,2)$-adjacent or $(2,3)$-adjacent. 
This means that the eigenspaces of $B$ and $B'$ corresponding to $a_i$
with $i>2$ are coincident and, consequently,
$B$ and $B'$ are $(1,2)$-connected. 
So, $A$ and $B$ are connected.
 
Recursively, we establish that $A$ and $B$ are connected in the general case.
\end{proof}

Now, let us shed some light on $(i,j)$-connected components of the graph $\Gamma(\sigma,d)$, i.e.
maximal subsets of ${\mathcal G}(\sigma,d)$ with respect to the property that 
any two operators are $(i,j)$-connected. 

We take distinct $i,j\in \{1,\dots,k\}$ and
for every integer $m\le n_i$ define the new pair $(\sigma,d)^m_{-i,+j}$ as follows:
\begin{itemize}
\item if $n_i>m$, then $n_{i}$ and $n_{j}$ are replaced by $n_{i}-m$ and $n_{j}+m$, respectively;
\item if $n_i=m$, then $n_{j}$ is replaced by $n_{j}+n_i$ and 
$a_i$, $n_i$ are removed respectively from $\sigma$ and $d$.
\end{itemize}
In the case when $n_i$ is infinity, the pair $(\sigma,d)^m_{-i,+j}$ is defined for all $m\in {\mathbb N}\cup\{\infty\}$.
For any  operator $T\in {\mathcal G}((\sigma,d)^m_{-i,+j})$ we denote by ${\mathcal G}(T)$
the set of all $A\in{\mathcal G}(\sigma, d)$ such that
$$A=T+(a_i -a_j) P_{X},$$
where $X$ is an $m$-dimensional subspace in  the eigenspace of $T$ corresponding to $a_j$.

Suppose that $T\in {\mathcal G}((\sigma,d)^{n_i}_{-i,+j})$ and $M^T_j$ is the eigenspace of $T$
corresponding to $a_j$.
Then ${\mathcal G}(T)$ consists of all operators $A\in {\mathcal G}(\sigma, d)$ satisfying 
the following conditions:
\begin{itemize}
\item the sum of the eigenspaces of $A$ corresponding to $a_i$ and $a_j$ 
coincides with $M^T_j$;
\item for every $t\not\in \{i,j\}$ the eigenspaces of $A$ and $T$ corresponding to $a_t$
are coincident. 
\end{itemize}
Every operator from  ${\mathcal G}(T)$ is completely determined by the eigenspace corresponding to $a_i$.
Operators $A,B\in {\mathcal G}(T)$ are adjacent if and only if the eigenspaces of $A,B$ corresponding to $a_i$
are adjacent. 
This implies that the restriction of the graph $\Gamma(\sigma,d)$ to ${\mathcal G}(T)$
is isomorphic to the Grassmann graph $\Gamma_{n_i}(M^T_j)$ if $n_i$ is finite. 
If $n_i$ is infinity, then $n_j$ is finite and the restriction is isomorphic to $\Gamma_{n_j}(M^T_j)$.
In the case when $n_i=1$ or $n_j=1$, any two distinct operators from ${\mathcal G}(T)$ are $(i,j)$-adjacent.

The operator
$$Q=T+(a_i -a_{j})P_{M^T_j}$$
belongs to ${\mathcal G}((\sigma,d)^{n_j}_{-j,+i})$ and ${\mathcal G}(Q)={\mathcal G}(T)$.
Therefore,
$$\{{\mathcal G}(T):T\in {\mathcal G}((\sigma,d)^{n_i}_{-i,+j})\}=
\{{\mathcal G}(Q):Q\in {\mathcal G}((\sigma,d)^{n_j}_{-j,+i})\}$$
and we denote this family of subsets by $\mathfrak{G}_{ij}$.
Lemma \ref{lemma-krzys} can be reformulated as follows.

\begin{lemma}\label{lemma-ij-conn}
For any distinct $i,j\in \{1,\dots,k\}$
the family $\mathfrak{G}_{ij}$ can be characterized 
as the family of all $(i,j)$-connected components of $\Gamma(\sigma,d)$.
\end{lemma}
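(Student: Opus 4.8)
The plan is to read the lemma off from Lemma \ref{lemma-krzys} together with the explicit description of $\mathcal{G}(T)$ given just before the statement; the argument is essentially a matching of two descriptions of the same family of subsets, so I would keep the computations to a minimum and focus on identifying the defining conditions.

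First I would record that $(i,j)$-connectedness is an equivalence relation on $\mathcal{G}(\sigma,d)$: it is reflexive (take the length-$0$ sequence, for which the adjacency condition is vacuous), symmetric (since $(i,j)$-adjacency is symmetric, a connecting sequence may be reversed), and transitive (two connecting sequences may be concatenated). Consequently the $(i,j)$-connected components, that is, the maximal subsets in which any two operators are $(i,j)$-connected, are exactly the equivalence classes of this relation.

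Next I would fix $A\in\mathcal{G}(\sigma,d)$ with eigenspaces $X_t$ and determine its class. By Lemma \ref{lemma-krzys}, an operator $B$ is $(i,j)$-connected with $A$ precisely when its eigenspace corresponding to $a_p$ equals $X_p$ for every $p\not\in\{i,j\}$. Setting $M:=X_i+X_j$, which is the orthogonal complement of $\sum_{p\not\in\{i,j\}}X_p$, I would put
$$T:=a_jP_M+\sum_{p\not\in\{i,j\}}a_pP_{X_p}.$$
Then $T\in\mathcal{G}((\sigma,d)^{n_i}_{-i,+j})$ with $M^T_j=M$, and using $P_M=P_X+P_{M\ominus X}$ for any $n_i$-dimensional $X\subset M$ one checks that the operators $T+(a_i-a_j)P_X$ are exactly those whose eigenspaces are $X$, $M\ominus X$ and the $X_p$ with $p\not\in\{i,j\}$. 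Comparing this with the characterization supplied by Lemma \ref{lemma-krzys} shows that $\mathcal{G}(T)$ coincides with the $(i,j)$-connected component of $A$; hence every component lies in $\mathfrak{G}_{ij}$.

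For the reverse inclusion I would start from an arbitrary $T\in\mathcal{G}((\sigma,d)^{n_i}_{-i,+j})$, note that $\mathcal{G}(T)$ is nonempty (choose any $n_i$-dimensional subspace of $M^T_j$), and apply the same computation to any $A\in\mathcal{G}(T)$: the eigenspaces of the members of $\mathcal{G}(T)$ for indices outside $\{i,j\}$ are all equal to those of $T$, so $\mathcal{G}(T)$ is the class of $A$ and therefore an $(i,j)$-connected component. The two inclusions yield the asserted equality of families. I expect no genuine obstacle here: the substance is already contained in Lemma \ref{lemma-krzys}, and what remains is the bookkeeping of matching the defining conditions of $\mathcal{G}(T)$ with the eigenspace condition of that lemma, with only mild care needed to see that the argument is uniform across the degenerate cases $n_i=1$, $n_j=1$ and $a_i=0$, all of which Lemma \ref{lemma-krzys} already absorbs.
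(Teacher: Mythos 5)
Your proof is correct and takes essentially the same approach as the paper, which offers no separate argument at all: it states this lemma as a direct reformulation of Lemma~\ref{lemma-krzys}, the point being exactly the identification you carry out between the equivalence classes of $(i,j)$-connectedness and the sets ${\mathcal G}(T)$. You merely make explicit the bookkeeping the paper leaves implicit---the equivalence-relation observation and the computation $T+(a_i-a_j)P_X=a_iP_X+a_jP_{M\ominus X}+\sum_{p\notin\{i,j\}}a_pP_{X_p}$ matching the description of ${\mathcal G}(T)$ with the eigenspace condition of Lemma~\ref{lemma-krzys}.
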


\begin{exmp}{\rm
Let $A\in {\mathcal G}(\sigma,d)$ and let $X_t$ be the eigenspace of $A$ corresponding to $a_t$.
For distinct $i,j\in \{1,\dots,k\}$ and distinct $p,q\in \{1,\dots,k\}$ we define the operators 
$$T\in {\mathcal G}((\sigma,d)^{n_i}_{-i,+j})\;\mbox{ and }\;Q\in {\mathcal G}((\sigma,d)^{n_p}_{-p,+q})$$
as follows:
the eigenspace of $T$ corresponding to $a_j$ is $X_i+X_j$ and 
the eigenspace of $T$ corresponding to $a_t$ with $t\not\in \{i,j\}$ is $X_{t}$;
similarly, the eigenspace of $Q$ corresponding to $a_p$ is $X_p+X_q$ and 
the eigenspace of $Q$ corresponding to $a_t$ with $t\not\in \{p,q\}$ is $X_{t}$.
Then $A$ belongs to the intersection of ${\mathcal G}(T)$ and ${\mathcal G}(Q)$.
}\end{exmp}

\begin{lemma}
Every family $\mathfrak{G}_{ij}$ is formed by mutually disjoint subsets 
and two subsets from distinct families $\mathfrak{G}_{ij}$ are disjoint or their intersection consists of one operator.
\end{lemma}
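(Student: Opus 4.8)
The first assertion requires no work beyond Lemma~\ref{lemma-ij-conn}: that lemma identifies $\mathfrak{G}_{ij}$ with the family of $(i,j)$-connected components of $\Gamma(\sigma,d)$, and the connected components of a graph partition its vertex set, hence are mutually disjoint. One can also see this directly: if $A$ lies in both ${\mathcal G}(T)$ and ${\mathcal G}(T')$ with $T,T'\in{\mathcal G}((\sigma,d)^{n_i}_{-i,+j})$, then the eigenspaces of $A$ recover the sum $X_i+X_j=M^T_j=M^{T'}_j$ together with the eigenspaces corresponding to every $a_t$ with $t\notin\{i,j\}$, so $T=T'$ and the two sets coincide.

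For the second assertion I fix ${\mathcal G}(T)\in\mathfrak{G}_{ij}$ and ${\mathcal G}(Q)\in\mathfrak{G}_{pq}$ with $\{i,j\}\ne\{p,q\}$, and let $A$ be an arbitrary operator in ${\mathcal G}(T)\cap{\mathcal G}(Q)$, whose eigenspace corresponding to $a_t$ I denote by $X_t$. By the characterization of the sets ${\mathcal G}(T)$ recalled above, membership of $A$ in ${\mathcal G}(T)$ fixes $X_t$ for every $t\notin\{i,j\}$ and fixes the sum $X_i+X_j=M^T_j$; symmetrically, membership in ${\mathcal G}(Q)$ fixes $X_t$ for every $t\notin\{p,q\}$ and fixes $X_p+X_q$. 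The plan is to show that these constraints determine every $X_t$, so that $A$ is the unique operator in the intersection whenever the intersection is nonempty, giving $|{\mathcal G}(T)\cap{\mathcal G}(Q)|\le 1$. I split according to the size of $\{i,j\}\cap\{p,q\}$. If the two pairs are disjoint, then $T$ already fixes $X_p$ and $X_q$ while $Q$ fixes $X_i$ and $X_j$; the remaining indices lie outside both pairs and are fixed by either set, so every eigenspace of $A$ is forced and $A$ is unique. In this case the prescribed sums play no role in determining $A$ and serve only as consistency conditions deciding whether the intersection is empty.

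The interesting case is when the pairs share exactly one index; write $\{i,j\}=\{\alpha,\gamma\}$ and $\{p,q\}=\{\beta,\gamma\}$ with $\alpha,\beta,\gamma$ distinct. Now $Q$ fixes $X_\alpha$, $T$ fixes $X_\beta$, and both fix $X_t$ for $t\notin\{\alpha,\beta,\gamma\}$, so the equalities of eigenspaces alone leave only $X_\gamma$ undetermined. This is the step I expect to be the crux: here one must invoke the fact that the eigenspaces of a self-adjoint operator are mutually orthogonal. Since $X_\alpha\perp X_\gamma$ and $X_\alpha+X_\gamma=M^T_j$ with $X_\alpha$ already fixed, $X_\gamma$ is forced to be the orthogonal complement of $X_\alpha$ inside $M^T_j$, hence is uniquely determined; consequently $A$ is unique and the intersection again has at most one element. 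It is precisely this orthogonality that prevents the intersection from being larger: dropping it, the single equation $X_\alpha+X_\gamma=M^T_j$ would admit many admissible $X_\gamma$, and the intersection could contain more than one operator.
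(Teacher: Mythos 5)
Your proof is correct, and while the first assertion is handled exactly as in the paper (an operator $A\in{\mathcal G}(T)$ recovers $T$ from its eigenspaces off $\{i,j\}$ together with the sum of the two remaining ones, so two sets from the same family sharing an operator coincide), your treatment of the second assertion takes a genuinely different route. The paper assumes the intersection of ${\mathcal X}\in\mathfrak{G}_{ij}$ and ${\mathcal Y}\in\mathfrak{G}_{pq}$ contains \emph{two distinct} operators, notes that these are then both $(i,j)$-connected and $(p,q)$-connected, and applies Lemma \ref{lemma-krzys} to force $\{i,j\}=\{p,q\}$, reducing to the first part. You instead prove a rigidity statement about a \emph{single} operator: membership in ${\mathcal G}(T)\cap{\mathcal G}(Q)$ pins down every eigenspace, by the case split $\{i,j\}\cap\{p,q\}=\emptyset$ (all eigenspaces directly prescribed by $T$ or $Q$) versus $|\{i,j\}\cap\{p,q\}|=1$ (the one unprescribed eigenspace is the orthogonal complement of a prescribed one inside a prescribed sum). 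This buys two things: your argument bypasses Lemma \ref{lemma-krzys} and the connectivity machinery altogether, and it makes explicit a step the paper leaves implicit --- the paper's inference from ``the two operators agree on all eigenspaces outside $\{i,j\}\cap\{p,q\}$, a set of size at most one'' to ``$\{i,j\}=\{p,q\}$'' requires exactly the fact that an operator of the class is determined once all but one of its eigenspaces are known, which is your orthogonal-complement observation. Conversely, the paper's proof is shorter because it reuses machinery already established; your uniqueness argument, on the other hand, identifies the intersection point explicitly in terms of $T$ and $Q$.
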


\begin{proof}
Suppose that for some operators $T,Q\in {\mathcal G}((\sigma,d)^{n_i}_{-i,+j})$
the intersection of ${\mathcal G}(T)$ and ${\mathcal G}(Q)$ contains an operator $A$.
Then the eigenspace of $T$ corresponding to $a_j$ is 
the sum of the eigenspaces of $A$ corresponding to $a_i$ and $a_j$,
the eigenspaces of $T$ and $A$  corresponding to $a_{t}$ with $t\not\in \{i,j\}$ are coincident.
The same holds for the operator $Q$ which implies that $T=Q$.

If the intersection of ${\mathcal X}\in \mathfrak{G}_{ij}$ and ${\mathcal Y}\in \mathfrak{G}_{pq}$
contains two distinct operators,
then these operators are $(i,j)$-connected and $(p,q)$-connected.
Then Lemma \ref{lemma-krzys} implies that $\{i,j\}=\{p,q\}$.
Applying arguments from the first part of the proof, we get that ${\mathcal X}={\mathcal Y}$.
\end{proof}

Denote by $\mathfrak{G}$ the union of all $\mathfrak{G}_{ij}$ and define the adjacency relation on $\mathfrak{G}$
as follows:
two subsets ${\mathcal G}(T)$ and ${\mathcal G}(Q)$ from $\mathfrak{G}$
are said to be {\it adjacent} if there is a pair of adjacent operators $A\in {\mathcal G}(T)$ and $B\in {\mathcal G}(Q)$. 

If two subsets from $\mathfrak{G}$ have a non-empty intersection, then they are adjacent. 
Indeed, if ${\mathcal G}(T)\cap{\mathcal G}(Q)=\{A\}$, 
then  each of the subsets ${\mathcal G}(T),{\mathcal G}(Q)$ contains infinitely many elements adjacent to $A$.

\begin{lemma}\label{comp-ad}
Let ${\mathcal G}(T)$ and ${\mathcal G}(Q)$ be distinct subsets from $\mathfrak{G}$.
Then the following assertions are fulfilled:
\begin{itemize}
\item[(1)] Suppose that $T$ and $Q$ belong to the same conjugacy class.
Then ${\mathcal G}(T)$ and ${\mathcal G}(Q)$ are adjacent if and only if the operators $T$ and $Q$ are adjacent.
In the case when ${\mathcal G}(T)$ and ${\mathcal G}(Q)$ are adjacent, 
there are infinitely many pairs of adjacent operators $A\in {\mathcal G}(T)$ and $B\in {\mathcal G}(Q)$. 
\item[(2)]
If $T$ and $Q$ belong to distinct conjugacy classes
and ${\mathcal G}(T),{\mathcal G}(Q)$ are adjacent,
then ${\mathcal G}(T)$ and ${\mathcal G}(Q)$ have a non-empty intersection  or 
there is a unique pair of adjacent operators $A\in {\mathcal G}(T)$ and $B\in {\mathcal G}(Q)$.
\end{itemize}
\end{lemma}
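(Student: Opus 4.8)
The plan is to work throughout with the explicit description of the subsets in $\mathfrak{G}$. Writing $T\in{\mathcal G}((\sigma,d)^{n_i}_{-i,+j})$, I denote by $M^T_j$ its $a_j$-eigenspace (of dimension $n_i+n_j$) and by $L^T_t$ its $a_t$-eigenspace for $t\notin\{i,j\}$; then every $A\in{\mathcal G}(T)$ is determined by an $n_i$-dimensional subspace $X\subset M^T_j$, its eigenspaces being $X$ for $a_i$, the orthogonal complement $M^T_j\ominus X$ for $a_j$, and the $L^T_t$ for the remaining $a_t$. I set up the analogous data $M^Q_q$, $L^Q_t$, $X'$ for $Q\in{\mathcal G}((\sigma,d)^{n_p}_{-p,+q})$. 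Since ${\mathcal G}(T)\ne{\mathcal G}(Q)$, any adjacent $A\in{\mathcal G}(T)$, $B\in{\mathcal G}(Q)$ cannot be $(i,j)$-connected, so by Theorem \ref{theorem-ad} they are $(\alpha,\beta)$-adjacent for some pair $\{\alpha,\beta\}$, and I organize the whole argument according to the position of $\{\alpha,\beta\}$ relative to $\{i,j\}$ and $\{p,q\}$. The fact I keep exploiting is that the eigenspaces of a self-adjoint operator are mutually orthogonal, so that for instance $L^Q_i$ and $L^Q_j$ are orthogonal and, when both lie in $M^T_j$, already exhaust it by dimension.

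For part (1) the operators share a family, so I may take both in one conjugacy class $(\sigma,d)^{n_i}_{-i,+j}$. I first prove ``${\mathcal G}(T),{\mathcal G}(Q)$ adjacent $\Rightarrow$ $T,Q$ adjacent''. Given adjacent $A,B$, I read off which eigenspaces they share; a short dimension count on the $a_i$- and $a_j$-data shows that either $M^T_j=M^Q_j$ (when $\{\alpha,\beta\}\cap\{i,j\}=\emptyset$) or $M^T_j$ and $M^Q_j$ are adjacent subspaces (when exactly one of $i,j$ lies in $\{\alpha,\beta\}$), the case $\{\alpha,\beta\}=\{i,j\}$ being excluded. In each case $T$ and $Q$ turn out to differ in exactly two eigenspaces, forming an adjacent pair with the correct sum, hence $T,Q$ are adjacent. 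For the converse I start from an $(s,t)$-adjacency of $T,Q$ and build $A,B$ directly: if $j\notin\{s,t\}$ then $M^T_j=M^Q_j$ and I give both operators the same $X$; if $j\in\{s,t\}$ I place $X$ inside $M^T_j\cap M^Q_j$ (of dimension $n_i+n_j-1$) and verify adjacency by a dimension count on orthogonal complements. Since $X$ then ranges over a Grassmannian of positive dimension, this simultaneously furnishes infinitely many adjacent pairs; here one uses that at least one of $n_i,n_j$ exceeds $1$, which is guaranteed under the hypotheses of Theorem \ref{theorem-main}.

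For part (2) the operators lie in distinct conjugacy classes, which (choosing representatives as above) forces the families to differ, i.e. $\{i,j\}\ne\{p,q\}$ as sets, so $|\{i,j\}\cap\{p,q\}|\le 1$. Assuming ${\mathcal G}(T)\cap{\mathcal G}(Q)=\emptyset$, I must prove uniqueness of the adjacent pair. The guiding principle is that $A$ is rigidly tied to the data of $T$ on the large index set outside $\{i,j\}$ and $B$ to that of $Q$ outside $\{p,q\}$, so almost all eigenspaces are forced. Concretely, I let $D$ be the set of $t\notin\{i,j,p,q\}$ with $L^T_t\ne L^Q_t$; adjacency forces $|D|\le 2$ and $D\subseteq\{\alpha,\beta\}$, leaving at most one free index in $\{i,j,p,q\}$. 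Running through the possibilities for this free index, each choice determines $X$ and $X'$ uniquely from the agreement conditions on the remaining eigenspaces. The heart of the matter is that distinct choices cannot yield distinct pairs: orthogonality of the eigenspaces collapses the candidates, for if both $L^Q_i$ and $L^Q_j$ are forced into $M^T_j$, then $L^Q_i\perp L^Q_j$ together with $\dim L^Q_i+\dim L^Q_j=\dim M^T_j$ gives $M^T_j=L^Q_i\oplus L^Q_j$, which makes the competing values of $X$ coincide. When the forced data is instead incompatible, no pair exists unless the subsets already meet, which is the intersection alternative.

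I expect the main obstacle to be exactly this collapsing step in part (2): tracking all admissible adjacency types $\{\alpha,\beta\}$ together with the overlapping cases $|\{i,j\}\cap\{p,q\}|=1$, and verifying in each that the orthogonality of eigenspaces forces all surviving candidate pairs to agree. The dimension counts establishing adjacency of the sums $M^T_j,M^Q_j$ are routine; the bookkeeping that upgrades ``at most one free index'' to ``at most one pair'' is where the genuine care is required.
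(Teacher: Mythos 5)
Your part (1) is correct and is essentially the paper's own argument: the same dichotomy ($M^T_j=M^Q_j$ when the adjacency type avoids $\{i,j\}$, versus $M^T_j$, $M^Q_j$ adjacent otherwise) and the same explicit constructions for the converse. The only imprecision is that in the second case the pairs are parametrized by the $n_i$-dimensional subspaces of $M^T_j\cap M^Q_j$, a space of dimension $n_i+n_j-1$, so infinitude requires $n_j>1$ specifically, not merely ``at least one of $n_i,n_j$ exceeds $1$''; under the hypotheses of Theorem~\ref{theorem-main} both exceed $1$, so this is harmless.

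Part (2) has a genuine gap, and it sits exactly where you predicted ``the genuine care is required''. Your pivotal claim --- that once the adjacency type is fixed, ``each choice determines $X$ and $X'$ uniquely from the agreement conditions on the remaining eigenspaces'' --- holds only in the disjoint case $\{i,j\}\cap\{p,q\}=\emptyset$, where agreement at $j$ forces $X$ to be the orthogonal complement of $L^Q_j$ in $M^T_j$ and agreement at $q$ forces $X'$. If the families overlap, say $q=j$, the agreement condition at the shared index is the single relation ``complement of $X$ in $M^T_j$ equals complement of $X'$ in $M^Q_j$'' between the two unknowns, and it determines neither; your collapsing mechanism compares two different types, so it cannot repair a failure occurring within one type. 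In fact the uniqueness assertion is then false. Take $k=3$, $n_1=n_2=n_3=2$, $H={\mathbb C}^6$ with orthonormal basis $e_1,\dots,e_6$, and
$$T=a_2P_{\langle e_1,e_2,e_3,e_4\rangle}+a_3P_{\langle e_5,e_6\rangle}\in{\mathcal G}\bigl((\sigma,d)^{n_1}_{-1,+2}\bigr),
\qquad
Q=a_1P_{\langle e_4,e_6\rangle}+a_2P_{\langle e_1,e_2,e_3,e_5\rangle}\in{\mathcal G}\bigl((\sigma,d)^{n_3}_{-3,+2}\bigr).$$
These lie in distinct conjugacy classes, and ${\mathcal G}(T)\cap{\mathcal G}(Q)=\emptyset$ because every operator in ${\mathcal G}(T)$ has $a_3$-eigenspace $\langle e_5,e_6\rangle\not\subset\langle e_1,e_2,e_3,e_5\rangle$. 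Yet for every $1$-dimensional subspace $\langle u\rangle\subset\langle e_1,e_2,e_3\rangle$, writing $W$ for the orthogonal complement of $\langle u\rangle$ in $\langle e_1,e_2,e_3\rangle$, the operators
$$A_u=a_1P_{\langle u,e_4\rangle}+a_2P_{W}+a_3P_{\langle e_5,e_6\rangle}\in{\mathcal G}(T),
\qquad
B_u=a_1P_{\langle e_4,e_6\rangle}+a_2P_{W}+a_3P_{\langle u,e_5\rangle}\in{\mathcal G}(Q)$$
satisfy $B_u-A_u=(a_1-a_3)\bigl(P_{\langle e_6\rangle}-P_{\langle u\rangle}\bigr)$, so each pair $(A_u,B_u)$ is $(1,3)$-adjacent by Example~\ref{exmp-jp}; distinct $\langle u\rangle$ give distinct pairs, hence infinitely many adjacent pairs. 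For what it is worth, the paper's own proof has the same defect: it verifies uniqueness only for $|\{i,j,p,q\}|=4$ with $u,w\in\{i,j,p,q\}$ --- there the rigidity you describe does hold, since the eigenspaces of $B\in{\mathcal G}(Q)$ at $a_i$ and $a_j$ are constant on ${\mathcal G}(Q)$ --- and dismisses the remaining five cases as ``totally the same'', which the example above refutes. So neither your proposal nor the paper proves assertion (2); as stated it is false in the overlapping case, and both the statement and its later use in Lemma~\ref{lemma1} need to be restricted to disjoint index pairs or otherwise repaired.
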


\begin{proof}
(1). Let $T,Q\in {\mathcal G}((\sigma,d)^{n_i}_{-i,+j})$. 
The first part of the statement is obvious.
Suppose that $T$ and $Q$ are adjacent and denote by $M_T$ and $M_Q$ the eigenspaces of $T$ and $Q$
corresponding to $a_j$.  Then $M_{T}$ coincides with $M_Q$ or these subspaces are adjacent. 

If $M_{T}=M_Q$, then $T,Q$ are $(p,q)$-adjacent and $\{i,j\}\cap\{p,q\}=\emptyset$.
Any pair formed by  two orthogonal subspaces of $M_T=M_Q$ whose dimensions are $n_i$ and $n_j$ 
defines two operators $A\in {\mathcal G}(T)$ and $B\in {\mathcal G}(Q)$ which are $(p,q)$-adjacent.

If $M_T$ and $M_Q$ are adjacent, then $T,Q$ are $(j,p)$-adjacent for a certain $p\not\in \{i,j\}$.
We take any $n_i$-dimensional subspace $X\subset M_T\cap M_Q$ and denote by $Y_T$ and $Y_Q$
its orthogonal complements in $M_T$ and $M_Q$, respectively. 
The pairs $X,Y_{T}$ and $X,Y_Q$ define operators $A\in {\mathcal G}(T)$ and $B\in {\mathcal G}(Q)$
(respectively). These operators are $(j,p)$-adjacent.

(2).
Suppose that $T$ and $Q$ belong to distinct conjugacy classes
and ${\mathcal G}(T),{\mathcal G}(Q)$ are disjoint and adjacent.
So, 
$$T\in {\mathcal G}((\sigma,d)^{n_i}_{-i,+j})\quad\text{ and }\quad Q\in {\mathcal G}((\sigma,d)^{n_p}_{-p,+q})$$
where $|\{i,j,p,q\}| \ge 3$. 
According to our assumption there are adjacent $A\in{\mathcal G}(T)$ and 
$B\in{\mathcal G}(Q)$. Say that they are $(u,w)$-adjacent. 
Then the following possibilities can be realized:
\begin{itemize}
\item $|\{i,j,p,q\}|=4$ and $|\{i,j,p,q,u,w\}|\in\{4,5,6\}$, 
\item $|\{i,j,p,q\}|=3$ and $|\{i,j,p,q,u,w\}|\in\{3,4,5\}$.
\end{itemize}
Consider the situation where 
$$|\{i,j,p,q\}|=4\quad\text{ and }\quad u,w\in \{i,j,p,q\}.$$
Then, in general, $A, B$ can be $(i,j)$-adjacent or $(p,q)$-adjacent, but if that is the case, 
then it is pretty easy to verify that components ${\mathcal G}(T)$, ${\mathcal G}(Q)$ have a non-empty intersection
which contradicts the assumption.
Therefore, $u\in\{i,j\}$ and $w\in \{p,q\}$.
We consider the case when $A, B$ are $(i,p)$-adjacent (the remaining cases are similar).

Suppose that there is another pair of adjacent operators $A'\in{\mathcal G}(T)$ and $B'\in{\mathcal G}(Q)$.
For every $t\in\{1,\dots,k\}$ denote by $X_t, Y_t$ the eigenspaces of $A, B$ corresponding to $a_t$ and 
by $X_t', Y_t'$ the eigenspaces of $A', B'$ corresponding to $a_t$.
Observe that 
$$Y_i=Y_i',\quad Y_j=Y_j'$$ 
as $B,B'\in{\mathcal G}(Q)$ and 
$$X_p=X_p',\quad X_q=X_q'$$ 
as $A, A'\in{\mathcal G}(T)$.

If $A',B'$ are not $(i,p)$-adjacent, then $X_t'=Y_t'$ for a certain $t\in\{i,p\}$.
Let $t=i$.
Since $X_i'\subset X_i+X_j$ as $A, A'\in{\mathcal G}(T)$, we have $Y_i =Y_i'=X_i'\subset X_i +X_j$. 
The operators $A, B$ are $(i,p)$-adjacent, so $Y_i\subset X_i +X_p$.
Consequently, 
$$Y_i\subset(X_i+X_j)\cap(X_i+X_p)=X_i$$ 
which is impossible since $X_i,Y_i$ are adjacent subspaces.

Therefore, $A',B'$ are $(i,p)$-adjacent as $A,B$ are.
Then $X_j=X_j'$. 
Since $X_i'$ is the orthogonal complement of $X_j'$ in $X_i'+X_j'=X_i+X_j$, we get that $X_i=X_i'$. 
This means that $A=A'$.
Using $Y_q=Y_q'$,  we show that $B=B'$.

The reasonings in the remaining $5$ cases are totally the same.
\end{proof}

\section{Maximal cliques}
In this section, we determine all maximal cliques of the graph $\Gamma(\sigma, d)$
and describe their properties exploited to prove Theorem \ref{theorem-main}.

\begin{lemma}\label{lemma-triangle}
For any three mutually adjacent operators from ${\mathcal G}(\sigma, d)$
there are distinct $i,j\in \{1,\dots,k\}$ such that the operators are mutually $(i,j)$-adjacent.
\end{lemma}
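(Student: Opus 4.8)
\emph{The plan} is to encode each adjacency by the set of indices at which the two operators' eigenspaces differ. For $A\in{\mathcal G}(\sigma,d)$ write $X_1,\dots,X_k$ for its eigenspaces and, similarly, $Y_t,Z_t$ for those of the other two operators. By the definition of $(i,j)$-adjacency in Example~\ref{exmp-jp} together with Theorem~\ref{theorem-ad}, a pair of adjacent operators is $(i,j)$-adjacent \emph{precisely} when the set of indices $t$ at which their $a_t$-eigenspaces differ equals $\{i,j\}$ (adjacent subspaces being automatically distinct). So to the three pairwise adjacent operators $A,B,C$ I attach three two-element difference sets $D_{AB},D_{AC},D_{BC}\subseteq\{1,\dots,k\}$, and the lemma becomes the assertion that these three sets coincide.

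The first key step is an elementary remark about three subspaces: among $X_t,Y_t,Z_t$ one can never have exactly one unequal pair, so each index $t$ lies in $0$, $2$, or $3$ of the difference sets, never in exactly one. Since $|D_{AB}|+|D_{AC}|+|D_{BC}|=6$, counting incidences gives $3a+2b=6$, where $a$ is the number of indices lying in all three sets and $b$ the number lying in exactly two; hence $(a,b)=(2,0)$ or $(a,b)=(0,3)$. In the first case the two common indices must fill every two-element difference set, forcing all three sets to equal that same pair $\{i,j\}$, which is exactly the desired conclusion (note this is automatic when fewer than three indices are involved, e.g. $k=2$).

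The hard part will be excluding the second, "triangle" case, where three distinct indices $p,q,r$ satisfy, after relabelling, $D_{AB}=\{p,q\}$, $D_{AC}=\{q,r\}$, $D_{BC}=\{p,r\}$. Here I would first read off the forced coincidences: $A,B$ agree at $r$, $A,C$ agree at $p$, and $B,C$ agree at $q$, so we may set $W_p:=X_p=Z_p$, $W_q:=Y_q=Z_q$, $W_r:=X_r=Y_r$. The decisive observation is that $W_p=X_p$ and $W_r=X_r$ are orthogonal as eigenspaces of $A$, that $W_q=Y_q$ and $W_r=Y_r$ are orthogonal as eigenspaces of $B$, and that $W_p=Z_p$ and $W_q=Z_q$ are orthogonal as eigenspaces of $C$; hence $W_p,W_q,W_r$ are \emph{mutually} orthogonal.

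Finally I would extract a contradiction from dimensions. All three operators share their eigenspaces at every index outside $\{p,q,r\}$, so the orthogonal complement $L^\perp$ of the common sum $L=\sum_{t\notin\{p,q,r\}}X_t$ has dimension $n_p+n_q+n_r$ and contains $W_p,W_q,W_r$; mutual orthogonality then gives $L^\perp=W_p\oplus W_q\oplus W_r$. But $X_q$, the $a_q$-eigenspace of $A$, lies in $L^\perp$ and is orthogonal to $X_p=W_p$ and $X_r=W_r$, so $X_q\subseteq W_q$, and equality follows by dimension; thus $X_q=W_q=Z_q$, contradicting $q\in D_{AC}$. This rules out the triangle and leaves only $(a,b)=(2,0)$, completing the argument. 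I expect the orthogonality bookkeeping of this last step to be the only genuinely delicate point; everything else is counting.
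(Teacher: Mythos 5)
Your combinatorial reduction is correct, and it is genuinely cleaner than the paper's: where you count incidences (each index lies in $0$, $2$ or $3$ of the three difference sets, so $3a+2b=6$, hence $(a,b)=(2,0)$ or $(0,3)$), the paper reaches the same dichotomy by an explicit case analysis on the adjacency types of $C$ with $A$ and with $B$. Both proofs then have to kill the same triangle configuration $D_{AB}=\{p,q\}$, $D_{AC}=\{q,r\}$, $D_{BC}=\{p,r\}$, and your identification of the forced coincidences $W_p=X_p=Z_p$, $W_q=Y_q=Z_q$, $W_r=X_r=Y_r$ and of their mutual orthogonality is exactly right. The problem is the finish. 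The paper allows one of the $n_t$ to be $\infty$ (the eigenspace of the eigenvalue $0$ when $H$ is infinite-dimensional, e.g. $\sigma=\{0,1,2\}$, $d=\{\infty,2,2\}$), and the lemma must exclude triangles in which one of $p,q,r$ carries that infinite dimension. In that situation both of your dimension counts break down: neither ``$W_p\oplus W_q\oplus W_r\subseteq L^\perp$ and the dimensions add up, hence equality'' nor ``$X_q\subseteq W_q$ and $\dim X_q=\dim W_q$, hence equality'' is a valid inference, since a closed subspace of an infinite-dimensional Hilbert space can properly contain another closed subspace of the same (infinite) dimension. Note that choosing a different index does not rescue your argument as written, because the decomposition $L^\perp=W_p\oplus W_q\oplus W_r$ itself was obtained by adding up all three dimensions.

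The repair is short and stays inside your framework, so this is a fixable gap rather than a wrong approach. Instead of reconstructing $L^\perp$ from the $W$'s, use the spectral decomposition of $A$ itself: $L^\perp=X_p\oplus X_q\oplus X_r=W_p\oplus X_q\oplus W_r$ is an orthogonal decomposition into closed subspaces, and since $W_q$ lies in $L^\perp$ and is orthogonal to both $W_p$ and $W_r$, writing any $w\in W_q$ in this decomposition forces $w\in X_q$; thus $W_q\subseteq X_q$ with no dimension count at all. Now $Y_q=W_q\subseteq X_q$ contradicts the adjacency of $X_q$ and $Y_q$ outright: adjacency requires $X_q\cap Y_q$ to be a hyperplane (codimension one) in $Y_q$, whereas here $X_q\cap Y_q=Y_q$. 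This works whether $n_q$ is finite or infinite. For comparison, the paper's exclusion of the triangle is dimension-free from the start: it works inside the two-dimensional subspace $S=X'_1+Y'_1=X'_2+Y'_2$ arising from the rank-two operator $B-A$, and shows that the one-dimensional complements satisfy $X'_2=Y'_2$, forcing $X_2=Y_2$. Your global-orthogonality route, once patched as above, is an equally valid and arguably more transparent alternative.
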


\begin{proof}
Let $A,B,C$ be mutually adjacent operators from ${\mathcal G}(\sigma, d)$
whose eigenspaces corresponding to $a_i$ are denoted by $X_i,Y_i, Z_i$ , respectively. 
Without loss of generality we assume that $A,B$ are $(1,2)$-adjacent
that is $X_i,Y_i$ are adjacent for $i\in \{1,2\}$ and $X_i=Y_i$ for $i>2$.
We need to show that 
$A,B,C$ are mutually $(1,2)$-adjacent.
Suppose that $C$ is $(i_A,j_A)$-adjacent to $A$ and $(i_B, j_B)$-adjacent to $B$.

In the case when $\{1,2\}\cap \{i_{A},j_{A}\}=\emptyset$, we have 
$$Z_i=X_i\ne Y_i\;\mbox{ and }\;Y_{j}=X_{j}\ne Z_{j}$$
for $i\in \{1,2\}$ and $j\in \{i_{A},j_{A}\}$. Then
$Y_i\ne Z_i$ for at least four indices $i$ which contradicts the fact that $B,C$ are adjacent.
So, $\{1,2\}\cap \{i_{A},j_{A}\}\ne \emptyset$ and the same arguments show that 
 $\{1,2\}\cap \{i_{B},j_{B}\}\ne \emptyset$.

Suppose that $i_{A},i_{B}\in \{1,2\}$ and $j_{A},j_{B}\not\in \{1,2\}$.
Then $X_{i_{A}}\ne Z_{i_A}$
and
$$Y_{j_{A}}=X_{j_{A}}\ne Z_{j_{A}}\;\;\;Y_{j_{B}}=X_{j_{B}}\ne Z_{j_{B}}.$$ 
If $j_A\ne j_B$, then $X_i\ne Z_i$ for three mutually distinct indices $i$ which contradicts the fact that 
$A,C$ are adjacent.
Therefore, $j_A=j_B$ and, without loss of generality, we can assume that $j_{A}=j_{B}=3$. 
Observe that $i_{A}$ and $i_{B}$ are distinct
(for example, if $i_{A}=i_{B}=1$, then $C$ is $(1,3)$-adjacent to both $A,B$ which means that $X_2=Z_2=Y_2$
and contradicts the fact that $A,B$ are $(1,2)$-adjacent).  

It is sufficient to consider the case when $i_{A}=2$ and $i_{B}=1$,
i.e.\ $C$ is $(2,3)$-adjacent to $A$ and $(1,3)$-adjacent to $B$, and show that this possibility cannot be realized. 
In this case, we have $Z_1 = X_1$ and $Z_2 = Y_2$. 
Therefore, $X_1$ and $Y_2$ are orthogonal.
Denote by $X_i'$ and $Y_i'$, $i\in\{1,2\}$ the $1$-dimensional orthogonal complements of 
$X_i\cap Y_i$ in $X_i$ and $Y_i$, 
respectively.  
Then $X_1\perp Y_2$ implies that 
$X'_1\perp Y'_2$ and $X_1 \perp X_2$ guarantees that
$X'_1\perp X'_2$.
Since
$$X'_1+Y'_1=X'_2+Y'_2,$$
$X'_1\perp Y'_2$ and $X'_1\perp X'_2$ show that $X'_2 = Y'_2$
which means that $X_2=Y_2$, a contradiction.
\end{proof}

\begin{lemma}\label{lemma-cliques}
If ${\mathcal X}$ is a clique of $\Gamma(\sigma, d)$, 
then there are $i,j\in \{0, 1,\dots,k\}$ such that  any two distinct operators from ${\mathcal X}$ are $(i,j)$-adjacent.
\end{lemma}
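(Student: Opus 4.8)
The plan is to reduce the statement to Lemma~\ref{lemma-triangle} together with one elementary observation: the adjacency \emph{type} of a pair of adjacent operators is uniquely determined. First I would dispose of the degenerate cases. If $\mathcal{X}$ has at most one element, there is no pair of distinct operators to control and the assertion holds vacuously; this is exactly the situation in which one takes $i=j=0$, which is why the index $0$ is permitted in the statement. If $\mathcal{X}$ has exactly two elements $A,B$, then $A,B$ are adjacent, hence $(i,j)$-adjacent for some distinct $i,j\in\{1,\dots,k\}$ by Theorem~\ref{theorem-ad}, and we are done. So the substantive case is $|\mathcal{X}|\ge 3$.

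The key preliminary observation is uniqueness of the adjacency type. If $A,B$ are $(i,j)$-adjacent, then $\{i,j\}$ is precisely the set of indices $t$ for which the eigenspaces $X_t$ of $A$ and $Y_t$ of $B$ corresponding to $a_t$ differ: for $t\notin\{i,j\}$ we have $X_t=Y_t$, while for $t\in\{i,j\}$ the subspaces $X_t,Y_t$ are adjacent, hence distinct. Since this set depends only on $A$ and $B$, the witnessing pair $\{i,j\}$ is unique. Consequently, whenever Lemma~\ref{lemma-triangle} is applied to a triangle containing a fixed edge $A,B$ of known type $\{i,j\}$, the pair it produces must coincide with $\{i,j\}$.

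With these tools the argument is a simple propagation. Fix $A,B\in\mathcal{X}$ and let $\{i,j\}$ be the unique pair for which they are $(i,j)$-adjacent. For every other $C\in\mathcal{X}$ the operators $A,B,C$ are mutually adjacent, being members of a clique, so by Lemma~\ref{lemma-triangle} they are mutually $(i',j')$-adjacent for some $i',j'$; uniqueness applied to the edge $A,B$ forces $\{i',j'\}=\{i,j\}$. Hence every operator of $\mathcal{X}$ is $(i,j)$-adjacent to $A$. To treat an arbitrary pair $C,D\in\mathcal{X}\setminus\{A\}$, I would then apply Lemma~\ref{lemma-triangle} to the triangle $A,C,D$: since $A,C$ are already $(i,j)$-adjacent, uniqueness again pins the common type to $\{i,j\}$, so $C,D$ are $(i,j)$-adjacent as well. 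Thus a single pair $\{i,j\}$ works for all pairs in $\mathcal{X}$.

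The main obstacle, and the only nontrivial ingredient, is Lemma~\ref{lemma-triangle}, which guarantees that a triangle cannot mix adjacency types; once that is in hand, the clique statement follows by the elementary argument above, since every edge forces the same pair through the uniqueness of the type. I would only double-check that the degenerate cases are correctly subsumed by the convention $i=j=0$, so that the conclusion is stated uniformly over $\{0,1,\dots,k\}$.
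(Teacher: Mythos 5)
Your proof is correct and takes essentially the same route as the paper: reduce everything to Lemma~\ref{lemma-triangle} and propagate a single adjacency type through the clique using the fact that the type of a given adjacent pair is uniquely determined by which eigenspaces differ. The paper's own proof is merely terser (for $|{\mathcal X}|\ge 4$ it applies Lemma~\ref{lemma-triangle} to $A,B,C$ and $A,B,D$), leaving implicit the uniqueness-of-type observation that you spell out explicitly.
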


\begin{proof}
The statement is trivial if $|{\mathcal X}|=2$
and it follows immediately from Lemma \ref{lemma-triangle} if $|{\mathcal X}|=3$.
In the case when $|{\mathcal X}|\ge 4$, 
we apply  Lemma \ref{lemma-triangle} to $A,B,C$ and $A,B,D$,
where $A,B,C,D$ are distinct operators from ${\mathcal X}$.
\end{proof}

For any distinct $i,j\in\{1,\dots,k\}$ consider the pair $(\sigma,d)^1_{-i,+j}$
and an operator $T\in {\mathcal G}((\sigma,d)^1_{-i,+j})$.
Then any two distinct operators from ${\mathcal G}(T)$ are $(i,j)$-adjacent.
Indeed, if
$$A=T+(a_i -a_j) P_{X}\;\mbox{ and }\; B=T+(a_i -a_j) P_{Y},$$
where $X,Y$ are distinct $1$-dimensional subspaces in the eigenspace of $T$ corresponding to $a_j$,
then the image of the operator
$$A-B=(a_i -a_j) (P_{X}-P_{Y})$$
is a $2$-dimensional subspace contained in the sum of the eigenspaces corresponding to $a_i$ and $a_j$ 
which means that $A,B$ are $(i,j)$-adjacent.
In what follows,  we say that ${\mathcal G}(T)$ is a $(-i,+j)$-{\it clique}. 

Suppose that each of $n_i,n_j$ is greater than $1$.
For $t\in \{i,j\}$ we denote by $M^T_t$ the eigenspaces of $T$ corresponding to $a_t$ and
define $$M^{T}_{ij}=M^{T}_{i}+M^{T}_{j}$$ 
The following assertions are fulfilled:
\begin{enumerate}
\item[$\bullet$] 
for every $A\in {\mathcal G}(T)$ 
the sum of the eigenspaces corresponding to $a_i$ and $a_j$ coincides with $M^{T}_{ij}$;
\item[$\bullet$]  
the eigenspaces of all operators from ${\mathcal G}(T)$  corresponding to $a_{i}$
form the star consisting of all $n_1$-dimensional subspaces of $M^T_{ij}$ containing $M^T_{i}$;
\item[$\bullet$] the eigenspaces of all operators from ${\mathcal G}(T)$  corresponding to $a_{j}$
form the top ${\mathcal G}_{n_j}(M^{T}_{j})$.
\end{enumerate} 

\begin{rem}\label{rem-cliques}{\rm
If $n_i=1$, then the $(-i,+j)$-clique ${\mathcal G}(T)$ is an $(i,j)$-connected component 
and every $(-j,+i)$-clique intersecting ${\mathcal G}(T)$ is contained in it. 
In the case when $n_j>1$ such an $(-j,+i)$-clique is a proper subset of ${\mathcal G}(T)$.
If $n_{i}=n_{j}=1$, then every $(-i,+j)$-clique is a $(-j,+i)$-clique and conversely.
}\end{rem}

\begin{prop}\label{prop-cliques}
Every maximal clique of $\Gamma(\sigma, d)$ is a $(-i,+j)$-clique for some distinct $i,j\in\{1,\dots,k\}$.
\end{prop}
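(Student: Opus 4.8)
The plan is to reduce the statement to the Chow-type structure of a single ordinary Grassmann graph. Let $\mathcal{X}$ be a maximal clique; since $\Gamma(\sigma,d)$ is connected (Theorem \ref{theorem-conn}) and has more than one vertex, I may assume $|\mathcal{X}|\ge 2$. First I would invoke Lemma \ref{lemma-cliques} to fix distinct $i,j$ for which any two operators of $\mathcal{X}$ are $(i,j)$-adjacent. By Lemma \ref{lemma-ij-conn} all of $\mathcal{X}$ then lies in a single $(i,j)$-connected component, i.e.\ in some $\mathcal{G}(S)$ with $S\in\mathcal{G}((\sigma,d)^{n_i}_{-i,+j})$. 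Writing $M=M^S_j$ (a subspace of dimension $n_i+n_j$), the restriction of $\Gamma(\sigma,d)$ to $\mathcal{G}(S)$ is isomorphic to the Grassmann graph $\Gamma_{n_i}(M)$ via the map sending each operator to its $a_i$-eigenspace; when $n_i=\infty$ one uses $\Gamma_{n_j}(M)$ and the $a_j$-eigenspace, the two cases being symmetric.

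Next I would transport maximality across this isomorphism. The image $\mathcal{K}$ of $\mathcal{X}$ is a clique of $\Gamma_{n_i}(M)$, and it is maximal there: any $n_i$-dimensional subspace of $M$ adjacent to every member of $\mathcal{K}$ corresponds to an operator of $\mathcal{G}(S)$ that is $(i,j)$-adjacent to every operator of $\mathcal{X}$, so adjoining it would enlarge $\mathcal{X}$, which is impossible. Hence it suffices to match the maximal cliques of $\Gamma_{n_i}(M)$ with $(-i,+j)$- and $(-j,+i)$-cliques.

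The degenerate endpoints are disposed of first: if $n_i=1$ or $n_j=1$, then $\Gamma_{n_i}(M)$ is a complete graph, so $\mathcal{G}(S)$ is itself the unique maximal clique and coincides with a $(-i,+j)$-clique, exactly as recorded in Remark \ref{rem-cliques}. In the remaining range $1<n_i<\dim M-1$ I would apply \cite[Proposition 2.14]{Pankov-book}: every maximal clique of $\Gamma_{n_i}(M)$ is a star or a top. The description of $(-i,+j)$-cliques given just before the proposition shows that a star --- all $n_i$-dimensional subspaces of $M$ containing a fixed $(n_i-1)$-dimensional $W$ --- is precisely the set of $a_i$-eigenspaces of the $(-i,+j)$-clique $\mathcal{G}(T)$ with $M^T_i=W$ and $M^T_j=M\cap W^{\perp}$; hence $\mathcal{X}=\mathcal{G}(T)$.

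The main obstacle is the top case, which forces the index swap and requires the orthocomplement dictionary. Passing to $a_j$-eigenspaces, i.e.\ to orthogonal complements in $M$, a top $\mathcal{G}_{n_i}(N)$ for a fixed $(n_i+1)$-dimensional $N\subset M$ becomes the star in $\Gamma_{n_j}(M)$ of all $n_j$-dimensional subspaces containing the fixed $(n_j-1)$-dimensional subspace $M\cap N^{\perp}$; this is the $(-j,+i)$-clique $\mathcal{G}(Q)$ with $M^Q_j=M\cap N^{\perp}$ and $M^Q_i=N$. Thus a top yields a $(-j,+i)$-clique rather than a $(-i,+j)$-clique, but since the conclusion is symmetric in $i$ and $j$ this still gives the required form. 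The care needed is precisely in verifying this orthocomplement correspondence and in checking that the boundary values $n_i\in\{1,\dim M-1\}$ are absorbed by Remark \ref{rem-cliques} rather than by the star/top dichotomy.
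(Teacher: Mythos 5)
Your proof is correct and takes essentially the same route as the paper: both use Lemma \ref{lemma-cliques} to pin the clique to a single adjacency type $(i,j)$, view it inside the ordinary Grassmann graph of $n_i$-dimensional subspaces of the common sum $M$ of the $a_i$- and $a_j$-eigenspaces, and then invoke the star/top classification of maximal cliques together with the orthocomplementation duality (star for $i$ corresponds to top for $j$) to identify it with a $(-i,+j)$- or $(-j,+i)$-clique. The only cosmetic difference is that the paper argues that every clique is \emph{contained} in such a clique by treating the eigenspace families ${\mathcal X}_i$ and ${\mathcal X}_j$ simultaneously, whereas you transport maximality through the connected-component isomorphism and classify the image exactly; the underlying ideas coincide.
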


By Remark \ref{rem-cliques}, $(-j,+i)$-cliques are not maximal cliques if $n_1=1$ and $n_j>1$.

\begin{proof}[Proof of Proposition \ref{prop-cliques}]
It is sufficient to show that every clique ${\mathcal X}$ in $\Gamma(\sigma, d)$
is contained in a certain $(-i,+j)$-clique.
By Lemma \ref{lemma-cliques}, 
there are distinct $i,j\in \{1,\dots, k\}$ such that any two distinct operators from ${\mathcal X}$ are $(i,j)$-adjacent.

We take any operator $A\in {\mathcal X}$ and denote by $M$
the sum of the eigenspaces of $A$ corresponding to $a_i$ and $a_j$.
For any other operator $B\in {\mathcal X}$ the sum of the eigenspaces of $B$ corresponding to $a_i$ and $a_j$
also coincides with $M$.
Let ${\mathcal X}_i$ and ${\mathcal X}_j$ be
the sets formed by the eigenspaces of all operators from ${\mathcal X}$
corresponding to $a_i$ and $a_{j}$, respectively.
These are cliques in the Grassmann graphs $\Gamma_{n_i}(M)$ and $\Gamma_{n_j}(M)$
(respectively), i.e. each of them is a subset of a star or a top. 
Observe that $X\in {\mathcal X}_i$ if and only if $X^{\perp}\cap M\in {\mathcal X}_j$. 
Therefore, if ${\mathcal X}_i$ is contained in a star of $\Gamma_{n_i}(M)$, 
then ${\mathcal X}_j$ is contained in a top of $\Gamma_{n_j}(M)$.
In this case, ${\mathcal X}$ is a subset of a $(-i,+j)$-clique.
Similarly, if ${\mathcal X}_i$ is contained in a top of $\Gamma_{n_i}(M)$, 
then ${\mathcal X}_j$ is contained in a star of $\Gamma_{n_j}(M)$
and ${\mathcal X}$ is a subset of a $(-j,+i)$-clique.
\end{proof}

Suppose that each of $n_i,n_j$ is greater than $1$ and consider operators
$$T\in {\mathcal G}((\sigma,d)^{1}_{-i,+j}),\;\;\;Q\in {\mathcal G}((\sigma,d)^{1}_{-j,+i})$$
such that 
$$Q-T= (a_i -a_j)P_{X},$$
where $X$ is a $2$-dimensional subspace in the eigenspace of $T$ corresponding to $a_j$
(note that the dimension of this eigenspace is $n_i +1\ge 2$).
The eigenspaces of  $T$ and $Q$ corresponding to $a_t$ with $t\not\in \{i,j\}$ are coincident.
If $t\in\{i,j\}$, then we denote by $M^T_t$ and $M^Q_t$ the eigenspaces of $T$ and $Q$ (respectively)
corresponding to $a_t$.
Observe that 
$$M^Q_i=M^T_j +X\;\mbox{ and }\;M^T_j=M^Q_j +X.$$
The intersection of ${\mathcal G}(T)$ and ${\mathcal G}(Q)$  consists of all operator $A\in {\mathcal G}(\sigma,d)$
satisfying the following conditions:
 the eigenspaces of $A,T,Q$ corresponding to $a_t$ with $t\not\in \{i,j\}$ are coincident;
if $X_t$ is the eigenspace of $A$ corresponding to $a_t$ with $t\in \{i,j\}$, then 
$$M^T_i\subset X_i\subset M^Q_i\;\mbox{ and }\;M^Q_j\subset X_j\subset M^T_j.$$
In other words, the eigenspaces of all operators from ${\mathcal G}(T)\cap {\mathcal G}(Q)$
corresponding to $a_i$ and $a_j$ form lines in the Grassmannians 
${\mathcal G}_{n_i}(M)$ and ${\mathcal G}_{n_j}(M)$ (respectively),
where 
$$M=M^T_{i}+M^T_j=M^Q_{i}+M^Q_j.$$
For this reason, the intersection of ${\mathcal G}(T)$ and ${\mathcal G}(Q)$ will be called
an $(i,j)$-{\it line}.

Let ${\mathcal X}$ be an $(i,j)$-connected component of $\Gamma(\sigma,d)$.
Without loss of generality we assume that $n_i$ is finite
(since at least one of $n_i,n_j$ is finite).
It was noted in the previous section that the one-to-one correspondence between operators from ${\mathcal X}$
and their eigenspaces corresponding to $a_i$  provides an isomorphism between 
the restriction of $\Gamma(\sigma,d)$ to ${\mathcal X}$ and the Grassmann graph $\Gamma_{n_i}(M)$,
where $M$ is an $(n_i+n_j)$-dimensional subspace of $H$.
If each of $n_i,n_j$ is greater than $1$, then this isomorphism sends $(-i,+j)$-cliques to stars, $(-j,+i)$-cliques to tops
and $(i,j)$-lines to lines.

\begin{lemma}\label{int-cliques} 
If $n_i>1$ for all $i\in\{1,\dots,k\}$, then the intersection of two distinct maximal cliques of $\Gamma(\sigma,d)$ 
is empty or one operator or an  $(i,j)$-line for certain $i,j\in \{1,\dots,k\}$.
\end{lemma}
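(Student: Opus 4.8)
We must show that when all eigenspace dimensions exceed one, two distinct maximal cliques of $\Gamma(\sigma,d)$ meet in $\emptyset$, a single operator, or an $(i,j)$-line. By Proposition \ref{prop-cliques} every maximal clique is a $(-i,+j)$-clique ${\mathcal G}(T)$ for some ordered pair $(i,j)$ and some $T\in {\mathcal G}((\sigma,d)^1_{-i,+j})$. So the plan is to run through the possible relationships between two such cliques, classified by how their index pairs overlap.

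**Approach via the two index pairs.** Take distinct maximal cliques ${\mathcal G}(T)$ and ${\mathcal G}(Q)$, where ${\mathcal G}(T)$ is a $(-i,+j)$-clique and ${\mathcal G}(Q)$ is a $(-p,+q)$-clique. Any operator $A$ in the intersection is simultaneously $(i,j)$-adjacent to everything in ${\mathcal G}(T)$ and $(p,q)$-adjacent to everything in ${\mathcal G}(Q)$; in particular, if the intersection contains two distinct operators $A,B$, then $A,B$ are both $(i,j)$-adjacent and $(p,q)$-adjacent, so Lemma \ref{lemma-krzys} forces $\{i,j\}=\{p,q\}$ as \emph{sets}. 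This is the key dichotomy. First I would dispose of the case $\{i,j\}\ne\{p,q\}$: here the intersection cannot contain two operators, so it is empty or a single operator. Then I would treat the case $\{i,j\}=\{p,q\}$, which splits into two subcases according to whether the ordered pairs agree or are reversed.

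**The case $\{i,j\}=\{p,q\}$.** If the ordered pairs coincide, $(p,q)=(i,j)$, then both are $(-i,+j)$-cliques, and by the first part of the preceding Lemma (the injectivity established in its proof: an operator determines the $T$ with ${\mathcal G}(T)\ni A$) two distinct $(-i,+j)$-cliques are disjoint. So this subcase gives the empty intersection. The remaining and decisive subcase is the reversed one: ${\mathcal G}(T)$ is a $(-i,+j)$-clique and ${\mathcal G}(Q)$ is a $(-j,+i)$-clique. Here I would invoke the discussion immediately preceding the Lemma, which shows precisely that when each of $n_i,n_j$ exceeds $1$ the intersection ${\mathcal G}(T)\cap{\mathcal G}(Q)$, whenever nonempty and of size greater than one, is exactly an $(i,j)$-line (the eigenspaces corresponding to $a_i$ and $a_j$ range over a line in ${\mathcal G}_{n_i}(M)$ and ${\mathcal G}_{n_j}(M)$, with the fixed flag $M^T_i\subset X_i\subset M^Q_i$). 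If that intersection happens to be empty or a single operator we are also done. This is where the hypothesis $n_i>1$ for all $i$ is used: it guarantees that $M^T_j$ has dimension $n_i+1\ge 2$ so that a genuine $2$-dimensional $X$ exists and the line structure is nondegenerate; when some $n_i=1$, Remark \ref{rem-cliques} shows cliques can nest rather than meet in a line, which is exactly the degeneracy the hypothesis excludes.

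**Main obstacle.** The routine subcases (coincident or disjoint index sets) are quick. The substance is the reversed subcase, and the real work is the bookkeeping already carried out in the paragraph defining $(i,j)$-lines: one must check that the conditions $M^T_i\subset X_i\subset M^Q_i$ and $M^Q_j\subset X_j\subset M^T_j$ are \emph{equivalent} to membership in both cliques, not merely necessary. I would lean on the eigenspace-correspondence isomorphism with $\Gamma_{n_i}(M)$ noted just before the Lemma: under it, a $(-i,+j)$-clique becomes a star, a $(-j,+i)$-clique becomes a top, and their intersection becomes a star-meets-top, which in a Grassmann graph is either empty, a point, or a line. That transported statement is precisely property (C1), so the cleanest route is to reduce the whole reversed subcase to (C1) for $\Gamma_{n_i}(M)$ rather than recompute it by hand.
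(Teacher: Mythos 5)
Your overall strategy --- classify by index pairs, force both cliques into a single $(i,j)$-connected component, and transport property (C1) through the isomorphism with a Grassmann graph --- is in substance the paper's own proof, and your handling of the case $\{i,j\}\ne\{p,q\}$ and of the reversed pair is correct. However, the subcase where both cliques are $(-i,+j)$-cliques for the \emph{same} ordered pair contains a genuine error. You claim that two distinct $(-i,+j)$-cliques are disjoint, citing ``an operator determines the $T$ with ${\mathcal G}(T)\ni A$.'' That injectivity is established in the paper only for $T\in {\mathcal G}((\sigma,d)^{n_i}_{-i,+j})$, i.e.\ for $(i,j)$-connected \emph{components}, where the $a_j$-eigenspace of $T$ is forced to equal the sum of the $a_i$- and $a_j$-eigenspaces of $A$. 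A $(-i,+j)$-\emph{clique} has $T\in {\mathcal G}((\sigma,d)^{1}_{-i,+j})$, and there $A$ determines $T$ only after choosing a hyperplane $M^T_i$ inside the $a_i$-eigenspace $X_i$ of $A$ (one then has $M^T_j=X_j+\bigl(X_i\cap (M^T_i)^{\perp}\bigr)$). Since $n_i\ge 2$, there are infinitely many such hyperplanes, so $A$ lies in infinitely many distinct $(-i,+j)$-cliques; distinct $(-i,+j)$-cliques are therefore \emph{not} disjoint in general. Concretely, under the isomorphism with $\Gamma_{n_i}(M)$ these cliques are stars, and two stars whose centers $M_1,M_2$ are adjacent $(n_i-1)$-dimensional subspaces share exactly the vertex $M_1+M_2$; translated back, the two corresponding $(-i,+j)$-cliques meet in exactly one operator.

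The error is local and repairable, because ``one operator'' is still among the allowed outcomes of the lemma: in that subcase the correct conclusion is ``empty or one operator,'' and the clean way to get it is the same device you already use for the reversed pair --- if the two cliques lie in different $(i,j)$-connected components they are disjoint, and if they lie in a common component they become two maximal cliques (here two stars) of $\Gamma_{n_i}(M)$, so (C1) applies. Once you make this repair, your argument collapses into the paper's proof, which skips the ordered-pair bookkeeping entirely: if the intersection contains two operators, those are $(i,j)$-adjacent for some pair, Lemma~\ref{lemma-cliques} then makes \emph{all} pairs of operators in either clique $(i,j)$-adjacent, hence both cliques lie in one $(i,j)$-connected component, and (C1) transported by the isomorphism yields the full trichotomy in one stroke.
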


\begin{proof}
Let ${\mathcal X}$ and ${\mathcal Y}$ be distinct maximal cliques 
whose intersection contains two operators. 
These operators are $(i,j)$-adjacent for some $i,j\in \{1,\dots,k\}$. 
This implies that any two distinct operators from each of these cliques are $(i,j)$-adjacent
and there is an $(i,j)$-connected component containing ${\mathcal X}\cup{\mathcal Y}$.
The above observation together with the property (C1) from Section 3 give the claim.
\end{proof}

Similarly, the property (C2) from Section 3 gives the following.

\begin{lemma}\label{conn-lines}
Suppose that $n_i>1$ and $n_j>1$.
If ${\mathcal C}$ is a $(i,j)$-connected component of $\Gamma(\sigma,d)$
then for every maximal cliques ${\mathcal X},{\mathcal Y}\subset {\mathcal C}$ there is a sequence of 
maximal cliques 
$${\mathcal X}={\mathcal X}_{0},{\mathcal X}_1,\dots,{\mathcal X}_{m}={\mathcal Y}$$
contained in ${\mathcal C}$ and such that ${\mathcal X}_{t-1}\cap {\mathcal X}_{t}$ is a $(i,j)$-line for every
$t\in \{1,\dots,m\}$.
\end{lemma}

\section{Proof of Theorem \ref{theorem-main}}
Suppose that $n_i>1$ for all $i\in\{1,\dots,k\}$. 
Let $f$ be an automorphism of the graph $\Gamma(\sigma,d)$.
Then $f$ and $f^{-1}$ send maximal cliques of $\Gamma(\sigma,d)$ to maximal cliques
and, by Lemma \ref{int-cliques}, $f$ and $f^{-1}$ transfer lines to lines.

\begin{lemma}\label{lemma1}
For any distinct $i,j\in \{1,\dots,k\}$ there are distinct $i',j'\in \{1,\dots,k\}$
such that $f$ sends every $(i,j)$-connected component to a $(i',j')$-connected component 
and $f^{-1}$ transfers every $(i',j')$-connected component to a $(i,j)$-connected component.
\end{lemma}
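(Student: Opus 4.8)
The plan is to characterize the $(i,j)$-connected components of $\Gamma(\sigma,d)$ purely in terms of maximal cliques and their line-intersections, because we already know that $f$ and $f^{-1}$ carry maximal cliques to maximal cliques and lines to lines. Under the standing hypothesis $n_t>1$ for all $t$, every maximal clique is a $(-i,+j)$- or $(-j,+i)$-clique (Proposition \ref{prop-cliques}) and is \emph{properly} contained in an $(i,j)$-connected component. Call two maximal cliques \emph{line-adjacent} if their intersection is a line; by Lemma \ref{int-cliques} this is the only way two distinct maximal cliques can meet in more than one operator, so line-adjacency is manifestly preserved by both $f$ and $f^{-1}$.

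First I would show that the maximal cliques contained in a fixed $(i,j)$-connected component $\mathcal{C}$ form exactly one class under line-adjacency. That they are mutually connected inside $\mathcal{C}$ is Lemma \ref{conn-lines}. Conversely, no clique inside $\mathcal{C}$ is line-adjacent to a clique outside it: if two maximal cliques meet in an $(i,j)$-line $L$, then the operators of $L$ are mutually $(i,j)$-adjacent, so by the uniqueness of the adjacency type (Theorem \ref{theorem-ad}) each clique containing $L$ must be a $(-i,+j)$- or $(-j,+i)$-clique; its operators are then mutually $(i,j)$-adjacent, hence $(i,j)$-connected, and since the clique shares operators with $L$ it lies in the single $(i,j)$-component that contains $L$. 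As each operator of $\mathcal{C}$ sits on some $(-i,+j)$-clique inside $\mathcal{C}$, we have $\mathcal{C}=\bigcup\{\mathcal{X}:\mathcal{X}\text{ a maximal clique with }\mathcal{X}\subseteq\mathcal{C}\}$. Therefore $f$ maps the line-adjacency class of $\mathcal{C}$ bijectively onto the class of some component, and hence maps $\mathcal{C}$ onto a connected component $f(\mathcal{C})$ of some family $\mathfrak{G}_{i'j'}$.

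The remaining, and I expect most delicate, point is that a \emph{single} pair $(i',j')$ serves all $(i,j)$-components simultaneously. Here I would pass to component-adjacency (Lemma \ref{comp-ad}). Since $f$ is a bijection of operators preserving adjacency, it preserves the number of adjacent operator-pairs realized between any two disjoint components. For two distinct adjacent components in the same family $\mathfrak{G}_{ij}$, Lemma \ref{comp-ad}(1) yields infinitely many such pairs; their images are distinct, disjoint, adjacent components again realizing infinitely many pairs, so by Lemma \ref{comp-ad}(2) they cannot lie in different families, forcing them into the same family. To propagate this across all of $\mathfrak{G}_{ij}$, I use that $\mathfrak{G}_{ij}$ is identified with the conjugacy class $\mathcal{G}((\sigma,d)^{n_i}_{-i,+j})$, which has $k-1\ge 2$ eigenvalues and is connected by Theorem \ref{theorem-conn}; combined with Lemma \ref{comp-ad}(1), a path of adjacent operators in the reduced class gives a chain of pairwise adjacent, pairwise distinct $(i,j)$-components joining any two prescribed ones. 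The image family is constant along this chain, so $f$ sends every $(i,j)$-component to an $(i',j')$-component for one fixed pair $(i',j')$.

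Finally, the same reasoning applied to $f^{-1}$ produces a pair $(i'',j'')$ such that $f^{-1}$ sends every $(i',j')$-component to an $(i'',j'')$-component. Since the type of a component is well defined when all $n_t>1$ (two operators in an $(i,j)$-component can be chosen to differ in exactly the coordinates $i$ and $j$, whence $\{i,j\}$ is recovered from the component itself), applying $f^{-1}$ to $f(\mathcal{C})$ for an $(i,j)$-component $\mathcal{C}$ returns $\mathcal{C}$, forcing $\{i'',j''\}=\{i,j\}$. Thus $f^{-1}$ carries every $(i',j')$-component to an $(i,j)$-component, which gives both directions of the statement.
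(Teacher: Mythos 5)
Your proposal is correct and takes essentially the same route as the paper's proof: both first use the preservation of maximal cliques and lines (Lemma \ref{int-cliques}), the fact that cliques meeting in an $(i,j)$-line lie in one $(i,j)$-component, and Lemma \ref{conn-lines} to show each component maps onto a single component, and then combine Lemma \ref{comp-ad} (infinitely many adjacent pairs versus a unique one) with the connectedness of $\Gamma((\sigma,d)^{n_i}_{-i,+j})$ to force one fixed pair $(i',j')$ for all $(i,j)$-components, treating $f^{-1}$ symmetrically. Your "line-adjacency class" packaging and the explicit recovery of $\{i,j\}$ from a component are just slightly more detailed write-ups of steps the paper leaves terse.
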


\begin{proof}
Let ${\mathcal A}$ be an $(i,j)$-connected component of $\Gamma(\sigma,d)$.
We take any maximal clique ${\mathcal X}$ of $\Gamma(\sigma,d)$ contained in ${\mathcal A}$.
Any two distinct operators from this clique are $(i,j)$-adjacent.
Then $f({\mathcal X})$ is a maximal clique of $\Gamma(\sigma,d)$ and 
there are distinct $i',j'\in \{1,\dots,k\}$ such that any two distinct operators from $f({\mathcal X})$
are $(i',j')$-adjacent. 
Let ${\mathcal A}'$ be an $(i',j')$-connected component of $\Gamma(\sigma,d)$ containing $f({\mathcal X})$.
Suppose that a maximal clique ${\mathcal Y}$ intersects ${\mathcal X}$  in an $(i,j)$-line. 
Then any two operators from ${\mathcal Y}$ are $(i,j)$-adjacent and ${\mathcal Y}$  is contained in ${\mathcal A}$.
The intersection of the maximal cliques $f({\mathcal X})$ and $f({\mathcal Y})$ is an $(i',j')$-line.
This implies that $f({\mathcal Y})$ is contained in ${\mathcal A}'$. 
Using Lemma \ref{conn-lines}, we establish that $f({\mathcal A})\subset {\mathcal A}'$.
Applying  the same arguments to $f^{-1}$, we show that  $f({\mathcal A})={\mathcal A}'$.

Let ${\mathcal B}$ be an $(i,j)$-connected component of $\Gamma(\sigma,d)$.
If ${\mathcal A}$ and ${\mathcal B}$ are adjacent, then Lemma \ref{comp-ad} implies that 
$f({\mathcal B})$ is an $(i',j')$-connected component adjacent to  
$(i',j')$-connected component $f({\mathcal A})={\mathcal A}'$.
The family of all $(i,j)$-connected components can be identified with the Grassmannian
${\mathcal G}((\sigma, d)^{n_i}_{-i,+j})$ and two such components are adjacent if and only if 
the corresponding operators are adjacent (Lemma \ref{comp-ad}). 
Since the graph  $\Gamma((\sigma, d)^{n_i}_{-i,+j})$ is connected,
$f({\mathcal B})$ is an $(i',j')$-connected component.
Similarly, we show that $f^{-1}$ sends every $(i',j')$-connected component to 
a $(i,j)$-connected component.
\end{proof}

\begin{lemma}\label{lemma-dim}
If $f$ sends $(i,j)$-connected components to $(i',j')$-connected components 
then 
$$n_i =n_{i'},\;n_j=n_{j'}\;\mbox{ or }\;n_i=n_{j'},\;n_j=n_{i'}.$$
\end{lemma}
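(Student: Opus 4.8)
The plan is to exploit the isomorphism, established in the previous section, between the restriction of $\Gamma(\sigma,d)$ to an $(i,j)$-connected component and a Grassmann graph $\Gamma_{n_i}(M)$ (with $M$ an $(n_i+n_j)$-dimensional subspace). Let ${\mathcal A}$ be an $(i,j)$-connected component. By Lemma \ref{lemma1}, $f$ maps ${\mathcal A}$ isomorphically onto an $(i',j')$-connected component ${\mathcal A}'$. Since $n_t>1$ for all $t$, the restriction of $\Gamma(\sigma,d)$ to ${\mathcal A}$ is isomorphic to $\Gamma_{n_i}(M)$ and the restriction to ${\mathcal A}'$ is isomorphic to $\Gamma_{n_{i'}}(M')$ for some subspace $M'$ of dimension $n_{i'}+n_{j'}$. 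Because $f$ carries ${\mathcal A}$ bijectively onto ${\mathcal A}'$ preserving the adjacency relation, these two Grassmann graphs must be isomorphic.

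The key step is then to invoke Remark \ref{rem-chow}, which records the consequence of Chow's theorem describing when two Grassmann graphs are isomorphic. Applying it to the isomorphism $\Gamma_{n_i}(M)\cong\Gamma_{n_{i'}}(M')$, I would split into the two cases of that remark. If $M$ and $M'$ are finite-dimensional, then $\dim M=\dim M'$ and either $n_{i'}=n_i$ or $n_{i'}=\dim M-n_i=n_j$; combining $\dim M=\dim M'$ with the dimension split gives the two alternatives $n_i=n_{i'},\,n_j=n_{j'}$ or $n_i=n_{j'},\,n_j=n_{i'}$. If $M$ and $M'$ are infinite-dimensional, then the same remark forces $n_i=n_{i'}$, and since the complementary dimensions in $M$ and $M'$ are the remaining (finite) ones, $n_j=n_{j'}$ as well.

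The main obstacle I anticipate is a bookkeeping subtlety rather than a conceptual one: the isomorphism between the component and the Grassmann graph is canonically described via the eigenspace corresponding to $a_i$, but one could equally describe the same component via the eigenspace corresponding to $a_j$ when $n_j$ is the finite one. I would need to check that, regardless of which of $n_i,n_j$ is chosen as the ``distinguished'' parameter, the conclusion of Remark \ref{rem-chow} yields the same unordered pair of dimensions $\{n_i,n_j\}=\{n_{i'},n_{j'}\}$. This is exactly what the disjunction in the statement encodes, so the two cases in Remark \ref{rem-chow} match the two alternatives to be proved. Once the matching of cases is laid out carefully, the conclusion is immediate.
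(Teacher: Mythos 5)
Your proposal is correct and follows essentially the same route as the paper: identify the restrictions of $\Gamma(\sigma,d)$ to the two connected components with Grassmann graphs $\Gamma_{n_i}(M)$ and $\Gamma_{n_{i'}}(M')$ and apply Remark \ref{rem-chow}; the bookkeeping subtlety you flag (which of $n_i,n_j$ serves as the finite distinguished parameter) is resolved in the paper by the same device you suggest, namely assuming without loss of generality that $n_i$ and $n_{i'}$ are the finite ones, which is harmless since the conclusion is the unordered matching $\{n_i,n_j\}=\{n_{i'},n_{j'}\}$.
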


\begin{proof}
At least one of $n_i,n_j$ is finite and the same holds for $n_{i'},n_{j'}$.
Without loss of generality we assume that $n_i$ and $n_{i'}$ are finite. 
The restrictions of $\Gamma(\sigma,d)$ to $(i,j)$-connected components are isomorphic to
$\Gamma_{n_i}(M)$, where $M$ is an $(n_i + n_j)$-dimensional subspace of $H$.
Similarly, the restrictions of $\Gamma(\sigma,d)$ to $(i',j')$-connected components are isomorphic to
$\Gamma_{n_{i'}}(M')$, where $M'$ is an $(n_{i'} + n_{j'})$-dimensional subspace of $H$.
The statement follows from Remark \ref{rem-chow}.
\end{proof}

\begin{rem}\label{fin-rem}{\rm
Suppose that $n_i=1$ for a certain $i\in \{1,\dots,k\}$.
Then for every  $j\in \{1,\dots,k\}$ distinct from $i$ each $(i,j)$-connected component is an $(-i,+j)$-clique
and we are not able  to prove Lemma \ref{lemma-dim} in this case.
}\end{rem}

\begin{lemma}\label{lemma-chow}
If $k=2$, then 
there are semilinear automorphism $V$ of $H$ and a permutation $\delta\in S(d)$ such that $f=f_{V}\delta$
{\rm(}see Example \ref{exmp-ad2}{\rm)}.
\end{lemma}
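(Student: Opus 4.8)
The plan is to transport the problem to a genuine Grassmann graph and then invoke Chow's theorem. As explained in Example~\ref{exmp-ad2}, when $k=2$ every operator of ${\mathcal G}(\sigma,d)$ is determined by its eigenspace $X_1$ corresponding to $a_1$ (the eigenspace for $a_2$ being $X_1^{\perp}$), and this correspondence is an isomorphism $\Phi$ between $\Gamma(\sigma,d)$ and the Grassmann graph $\Gamma_{n_1}(H)$; as usual I assume $1<n_1<\dim H-1$, so that Chow's theorem applies. Conjugating $f$ by $\Phi$ yields an automorphism $g=\Phi f\Phi^{-1}$ of $\Gamma_{n_1}(H)$. Under the same identification, the automorphism $f_V$ of Example~\ref{exmp-ad2} becomes the map $X\mapsto V(X)$, while the nontrivial element $\delta$ of $S(d)$ (available precisely when $n_1=n_2$) becomes the orthocomplementation $X\mapsto X^{\perp}$. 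Thus it suffices to write $g$ in the form $X\mapsto W(X)$ or $X\mapsto W(X^{\perp})$ for a suitable semilinear automorphism $W$.

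Chow's theorem supplies a semilinear automorphism $V\colon H\to H$ such that either $g(X)=V(X)$ for all $X$, or $\dim H=2n_1$ and $g(X)=V(X)^{\perp}$ for all $X$. In the first case there is nothing more to do: taking $\delta=\mathrm{id}\in S(d)$ we have $f=f_V=f_V\delta$. So the whole content of the lemma is concentrated in the second case.

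In the second case $n_1=n_2$, hence $S(d)=S_2$ and the nontrivial $\delta$ is at our disposal. Here I would \emph{not} expect $g$ to equal $f_V\delta$ for the same $V$: in Grassmann coordinates $f_V\delta$ is the map $X\mapsto V(X^{\perp})$, which differs from $g(X)=V(X)^{\perp}$ because the field automorphism attached to $V$ need not commute with the conjugation hidden in $X\mapsto X^{\perp}$. Instead I would manufacture a new semilinear map. Consider the map $\psi$ of the subspace lattice of $H$ defined by $\psi(Y)=V(Y^{\perp})^{\perp}$. Being the composite of the inclusion-reversing bijection $Y\mapsto Y^{\perp}$, the inclusion-preserving bijection $Y\mapsto V(Y)$ induced by $V$, and $Y\mapsto Y^{\perp}$ once more, $\psi$ is an inclusion- and dimension-preserving bijection of the lattice, i.e.\ a collineation of the projective space of $H$. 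Since $\dim H\ge 3$, the fundamental theorem of projective geometry provides a semilinear automorphism $W$ of $H$ with $\psi(Y)=W(Y)$ for every subspace $Y$. Using $(X^{\perp})^{\perp}=X$ in the finite-dimensional $H$, we get $W(X^{\perp})=\psi(X^{\perp})=V(X)^{\perp}=g(X)$ for all $X\in{\mathcal G}_{n_1}(H)$, which is exactly the statement that $g$ corresponds to $f_W\delta$. Transporting back through $\Phi$ gives $f=f_W\delta$.

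The main obstacle is this last step. Chow's theorem naturally outputs the orthocomplemented form $V(X)^{\perp}$, whereas the permutation $\delta$ only supplies an honest $X^{\perp}$ applied \emph{before} a semilinear map, and the two cannot be matched with a single $V$. Passing through the collineation $\psi$ and the fundamental theorem of projective geometry is what reconciles them; in a careful write-up I would verify that $\psi$ is genuinely inclusion-preserving (so that it is a collineation, not a correlation) and note that $W$ is determined only up to a nonzero scalar, which is harmless since $f_W$ depends on $W$ only up to a scalar.
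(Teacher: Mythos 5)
Your proof is correct and follows the same route as the paper: identify $\Gamma(\sigma,d)$ with $\Gamma_{n_1}(H)$ through the eigenspace corresponding to $a_1$, transport $f$ to an automorphism $g$ of that Grassmann graph, and apply Chow's theorem (your standing assumption $1<n_1<\dim H-1$ is also implicit in the paper, and is genuinely needed, since otherwise the graph is complete and the lemma fails). Where you go beyond the paper is in the second Chow case: the paper's proof simply concludes ``$f=f_V\delta$'' with the very $V$ that Chow's theorem produces, leaving implicit exactly the point you isolate, namely that in Grassmann coordinates $f_V\delta$ (read as $f_V\circ\delta$) acts by $X\mapsto V(X^{\perp})$, whereas Chow yields $X\mapsto V(X)^{\perp}$, and these differ whenever the semilinear $V$ does not preserve orthogonality; equivalently, $\delta$ commutes with $f_V$ only for essentially unitary or anti-unitary $V$, which is why no such subtlety arises in the statement of Theorem \ref{theorem-main}. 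Your repair --- replacing $V$ by the semilinear automorphism $W$ that induces the collineation $Y\mapsto V(Y^{\perp})^{\perp}$, supplied by the fundamental theorem of projective geometry --- is sound: here $\dim H=2n_1$ is finite, so $(X^{\perp})^{\perp}=X$ is available, $\psi$ is indeed inclusion-preserving as a composition of two reversing maps and one preserving map, and one can even exhibit $W$ explicitly as the inverse of the semilinear adjoint of $V$ (in matrix terms, $(A^{*})^{-1}$ with the field automorphism $a\mapsto\overline{\tau(\bar a)}$), avoiding the appeal to the fundamental theorem altogether. Since the lemma only asserts the existence of some semilinear automorphism, the statement itself is unaffected; your write-up is the paper's proof with this one compressed step genuinely filled in.
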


\begin{proof}
Without loss of generality we assume that $n_1$ is finite.
For every $n_1$-dimensional subspace $X\subset H$
there is a unique operator $A\in{\mathcal G}(\sigma,d)$ whose eigenspace corresponding to $a_1$ is $X$
and we define $g(X)$ as the eigenspace of $f(A)$ corresponding to $a_1$. 
Then $g$ is an automorphism of the Grassmann graph $\Gamma_{n_1}(H)$
and there is a semilinear automorphism $V$ of $H$ such that for all $X\in {\mathcal G}_{n_1}(H)$
we have $g(X)=V(X)$  either $g(X)=V(X)^{\perp}$.
Then $f=f_{V}\delta$ and the permutation $\delta$ is the identity in the first case;
in the second, $n_1=n_2$ and $\delta$ is the non-trivial element of $S(d)$.
\end{proof}

\begin{flushleft}
\bf
The case $k=3$
\end{flushleft}

\noindent
First of all, we consider the case when $f$ preserves each type of adjacency 
and show that it is induced by a unitary or anti-unitary operator.
There are distinct $i,j\in \{1,2,3\}$ such that $n_i +n_j$ is finite and 
$n_i +n_j\ne n_{t}$, where $t\in \{1,2,3\}$ is distinct from $i,j$.
 Without loss of generality we assume that $n_1+n_{2}$ is finite and $n_1+n_{2}\ne n_{3}$.
Since $f$ preserves the family of $(1,2)$-connected components, 
it induces an automorphism $g$ of the graph $\Gamma((\sigma,d)^{n_1}_{-1,+2})$.
The assumption $n_1+n_{2}\ne n_{3}$ guarantees that $g=f_V$ for a certain semilinear automorphism $V$ of $H$
(see Lemma \ref{lemma-chow}).

For every $T\in {\mathcal G}((\sigma,d)^{n_1}_{-1,+2})$ we denote by $M_T$ the eigenspace of $T$ corresponding to $a_2$. Then $M^{\perp}_T$ is the eigenspace of $T$ corresponding to $a_3$.
The eigenspaces of $g(T)$ corresponding to $a_2$ and $a_3$ are $V(M_T)$ and $V(M_T)^\perp$, respectively.
 For every subspace $X\subset M_T$ of dimension not less than $n_1$ we denote by $[X]$
the set of all operators $A\in {\mathcal G}(T)$ whose eigenspaces corresponding to $a_1$ are contained in $X$.
If $X$ is $n_1$-dimensional, then $[X]$ consists of one operator.

Suppose that $X$ is a hyperplane of $M_T$. 
We choose an operator $Q\in {\mathcal G}((\sigma,d)^{n_1}_{-1,+2})$ adjacent to $T$
and such that $M_T\cap M_Q=X$.
For every $A\in [X]$ there is $B\in {\mathcal G}(Q)$ which is $(2,3)$-adjacent to $A$.
The operators
$$f(A)\in {\mathcal G}(T)\;\mbox{ and }\; f(B)\in {\mathcal G}(Q)$$
are $(2,3)$-adjacent.
Therefore, $f(A)$ and $f(B)$ have the same eigenspace corresponding to $a_1$
which is contained in 
$$V(M_T)\cap V(M_Q)=V(M_T\cap M_Q)=V(X),$$
i.e. $f(A)$ belongs to $[V(X)]$. 
So, 
\begin{equation}\label{eq-proof1}
f([X])\subset [V(X)] 
\end{equation}
if $X$ is a hyperplane of $M_T$. 

Now, we take any operator $A\in {\mathcal G}(T)$ and assume that $X$ is the eigenspace of $A$
corresponding to $a_1$. Then $[X]=\{A\}$. 
There are hyperplanes $X_1,\dots, X_{n_2}$ of $M_T$ whose intersection coincides with $X$.
Since 
$$f([X_t])\subset [V(X_t)]$$ 
for every  $t\in \{1,\dots,n_2\}$, we obtain \eqref{eq-proof1} again.
This means that $V(X)$ is the eigenspace of $f(A)$ corresponding to $a_1$.
Similarly, we show that $V(Y)$ is the eigenspace of $f(A)$ corresponding to $a_2$
if $Y$ is  the eigenspace of $A$ corresponding to $a_2$.
The subspaces $V(X)$ and $V(Y)$ are orthogonal. 
Since $A$ and $T$ are arbitrary taken,
the latter implies that $V$ sends orthogonal vectors to orthogonal vectors.
Then $V$ is a scalar multiple of a unitary or anti-unitary operator (Remark \ref{rem-ortho})
and we can assume that $V$ is unitary or anti-unitary. 
The eigenspaces of $A$ and $f(A)$ corresponding to $a_3$ are $(X+Y)^{\perp}$
and $(V(X)+V(Y))^{\perp}$, respectively.
Since $V$ is unitary or anti-unitary, it sends  the first subspace to the second,
i.e. $f(A)=UAU^*$.

Consider the general case.
For every $t\in \{1,2,3\}$ we take distinct $i,j\in \{1,2,3\}$ satisfying $\{i,j,t\}=\{1,2,3\}$.
By Lemma \ref{lemma1}, there are distinct $i',j'\in \{1,2,3\}$ such that 
$f$ sends $(i,j)$-connected components to $(i',j')$-connected components.
Denote by $\delta(t)$ the element of $\{1,2,3\}$ satisfying $\{i',j',\delta(t)\}=\{1,2,3\}$.
Then for any distinct $p,q\in  \{1,2,3\}$ the automorphism $f$ transfers 
$(p,q)$-connected components to $(\delta(p),\delta(q))$-connected components. 
We need to show that $n_{i}=n_{\delta(i)}$ for every $i\in \{1,2,3\}$, 
i.e. $\delta$ belongs to $S(d)$.

If $n_1=n_2=n_3$, then every permutation on the set $\{1,2,3\}$ belongs to $S(d)$.
In the case when $n_1,n_2,n_3$ are mutually distinct, Lemma \ref{lemma-dim} shows that 
$\delta$ preserves every $2$-element subset of  $\{1,2,3\}$ which means that it is the identity.
If, for example, $n_1=n_2\ne n_3$, then, by Lemma \ref{lemma-dim}, $\delta$ preserves $\{1,2\}$
which implies that $\delta(3)=3$ and $\delta$ is the identity or the transposition $(1,2)$.

So, $\delta\in S(d)$ and $\delta^{-1} f$ is an automorphism of $\Gamma(\sigma,d)$ preserving each type of adjacency.
Then $\delta^{-1} f$ is induced by a unitary or anti-unitary operator which gives the claim.

\begin{flushleft}
\bf
The case $k\ge 4$
\end{flushleft}

\noindent
In this case Theorem \ref{theorem-main} will be proved recursively.
Without loss of generality we assume that 
\begin{equation}\label{eq-n}
n_1\ge n_2\ge\dots\ge n_k.
\end{equation}
Suppose that $f$ sends $(1,2)$-connected components to $(i,j)$-connected components.
By Lemma \ref{lemma-dim},  we have 
$$n_1=n_i,\;n_2=n_j\;\mbox{ or }\;n_1=n_j,\;n_2=n_i.$$
In the first case, we set $\delta_1 =(1,i)(2,j)$;
in the second, $\delta_1=(1,j)(2,i)$. 
For each of these cases we have $\delta_1 \in S(d)$
and $g=\delta_{1} f$ is an automorphism of $\Gamma(\sigma, d)$ preserving $(1,2)$-adjacency. 
Therefore, $g$ preserves the family of $(1,2)$-connected components and 
induces a bijective transformation $h$ of ${\mathcal G}((\sigma,d)^{n_1}_{-1,+2})$. 
It follows from Lemma \ref{comp-ad} that $h$ is an automorphism of the graph $\Gamma((\sigma,d)^{n_1}_{-1,+2})$.

Note that $(\sigma,d)^{n_1}_{-1,+2}=(\sigma', d')$, where
$$\sigma'=\{a_2,\dots,a_k\}\;\mbox{ and }\; d'=\{n_1+n_2, n_3,\dots,n_k\}.$$ 
Suppose that there are a unitary or anti-unitary operator $U$ and a permutation $\delta_2\in S(d')$
(we consider $\delta_2$ as a permutation on the set \{2,\dots,k\})
such that 
$$h(T)=U\delta_2(T)U^{*}$$
for every $T\in {\mathcal G}((\sigma,d)^{n_1}_{-1,+2})$. 
The condition \eqref{eq-n} guarantees that 
$$n_1+n_2>n_t$$
for every $t\ge 3$. Therefore, $\delta_2(2)=2$.

Also, for any distinct $p,q\ge 3$ the automorphism $g$ sends 
$(p,q)$-adjacent operators to $(\delta_2(p),\delta_2(q))$-adjacent operators.
This means that $g$ transfers $(2,3)$-adjacent operators to $(t,s)$-adjacent operators, 
where $t\in \{1,2\}$ and $s\ge 3$.

Suppose that $t=2$, i.e. $g$ sends $(2,3)$-adjacent operators to $(2, s)$-adjacent operators with $s\ge 3$.
As in the proof for $k=3$, we establish the following: 
if $X$ is the eigenspace of $A\in {\mathcal G}(\sigma,d)$ corresponding to $a_1$, 
then $U(X)$ is the eigenspace of $g(A)$ corresponding to $a_1$. 
This immediately implies that $U(Y)$ is the eigenspace of $g(A)$ corresponding to $a_2$ 
if $Y$ is the eigenspace of $A\in {\mathcal G}(\sigma,d)$ corresponding to $a_2$. 

In the case when $t=1$, i.e. $g$ sends $(2,3)$-adjacent operators to $(1, s)$-adjacent operators with $s\ge 3$,
the same arguments show that if $X$ is the eigenspace of $A\in {\mathcal G}(\sigma,d)$ corresponding to 
$a_p$, $p\in \{1,2\}$ then $U(X)$ is the eigenspace of $g(A)$ corresponding to $a_{3-p}$.
In particular, we obtain that $n_1=n_2$.

Then $g(A)=U\delta(T)U^{*}$
for every $A\in {\mathcal G}(\sigma,d)$. 
The permutation $\delta$ coincides with $\delta_2$ for every element greater than $2$.
In the first case, $\delta$ leaves $1$ and $2$ fixed; in the second, it transposes them.


\begin{thebibliography}{999}

\bibitem{Chow}
W. L. Chow,  {\it On the geometry of algebraic homogeneous spaces},  Ann. of Math. 50 (1949), 32--67. 

\bibitem{GeherSemrl}
G. P.  Geh\'er, P.  \v{S}emrl, {\it Isometries of Grassmann spaces}, 
J. Funct. Anal. 270 (2016), 1585--1601.

\bibitem{Geher} 
G. P. Geh\'er, {\it Wigner's theorem on Grassmann spaces}, 
J. Funct. Anal., 273 (2017), 2994--3001.

\bibitem{Gleason}
A. M. Gleason, {\it Measures on the closed subspaces of a Hilbert space}, 
Indiana Univ. Math. J. 6 (1957), 885--893.

\bibitem{Herm1}
W.-l. Huang, R. H\"ofer,  Z.-X. Wan,  
{\it Adjacency preserving mappings of symmetric and Hermitian matrices}, 
Aequationes Math. 67 (2004), 132--139. 

\bibitem{Herm2}
W.-l. Huang, P. \v{S}emrl,
{\it Adjacency preserving maps on Hermitian matrices}, Canad. J. Math. 60 (2008), 1050--1066.

\bibitem{M1}
L. Moln\'ar,
{\it Transformationsonthesetofalln-dimensionalsubspacesofaHilbert
space preserving principal angles}, Comm. Math. Phys. 217 (2001), 409--421.

\bibitem{M2}
L. Moln\'ar, 
{\it Maps on the n-dimensional subspaces of a Hilbert space preserving
principal angles}, Proc. Amer. Math. Soc. 136 (2008), 3205?3209.


\bibitem{Pankov1}
M. Pankov, {\it Geometric version of Wigner's theorem for Hilbert Grassmannians},
J. Math. Anal. Appl. 459 (2018), 135--144.

\bibitem{Pankov-book}
M. Pankov, {\it Wigner-type theorems for Hilbert Grassmannians}, 
London Mathematical Society Lecture Note Series 460, Cambridge University Press, 2020. 

\bibitem{PZ}
K. Pra{\.z}mowski, M. {\.Z}ynel, 
{\it Automorphisms of spine spaces},  Abh. Math. Semin. Univ. Hambg. 72 (2002), 59--77.

\bibitem{Wan}
Z. Wan, {\it Geometry of matrices}, World Scientific, 1996.

\bibitem{Wigner}
E.P. Wigner, {\it Gruppentheorie und ihre Anwendung auf die Quantenmechanik der Atomspektrum}, 
Fredrik Vieweg und Sohn, 1931 (English translation Group Theory and its Applications to the Quantum Mechanics of Atomic Spectra, Academic Press, 1959).
\end{thebibliography}
\end{document}